\documentclass[11pt,reqno]{amsart}

\usepackage{amsmath,amssymb,amsthm,mathtools, comment}
\usepackage[hidelinks]{hyperref}
\usepackage[margin=1in]{geometry}

\newtheorem{theorem}{Theorem}
\newtheorem{proposition}[theorem]{Proposition}
\newtheorem{lemma}[theorem]{Lemma}
\newtheorem{corollary}[theorem]{Corollary}
\newtheorem{remark}[theorem]{Remark}

\newcommand{\Z}{\mathbb Z}
\newcommand{\C}{\mathbb C}
\newcommand{\eps}{\varepsilon}
\newcommand{\ord}{\operatorname{ord}}

\title{Improved average and almost-all bounds for $G(n)$}
\author{Chiara Bellotti}
\address{Max Planck Institute for Mathematics, Bonn, Germany}
\email{bellotti@mpim-bonn.mpg.de}
\date{}

\usepackage{xcolor}
\definecolor{carmine}{rgb}{0.7, 0.1, 0.09}

\keywords{generating sets modulo $n$, character sums,
large sieve, average order}
\subjclass[2020]{Primary 11N37; Secondary 11L40, 11N35, 11A07.}

\thanks{C.B. was supported by Max Planck Institute for Mathematics in Bonn}
\begin{document}
\begin{abstract}
In this paper we study the least positive integer $G(n)$ such that the integers $a\leq G(n)$ with
$(a,n)=1$ generate $(\mathbb Z/n\mathbb Z)^\times$.  We prove that, for every $\varepsilon>0$,
\[
    \sum_{n\leq x}G(n)\ll_\varepsilon x(\log x)^{8/3+\varepsilon},
\]
improving the previously known bound $\ll x(\log x)^{97}$.  We also
prove that $G(n)\leq(\log n)^2$ for almost all $n$, unconditionally.
Thus, for almost all $n$, we obtain the same logarithmic exponent $2$
as in the classical pointwise bound under GRH.
\end{abstract}
\maketitle
\section{Introduction}
Given a positive integer $n$, let $ \Gamma_n:=(\mathbb Z/n\mathbb Z)^\times,$
and, when $\Gamma_n$ is nontrivial, let $G(n)$ denote the least positive integer $G$ such that the residue classes of the integers $a\leq G$ with $(a,n)=1$ generate $\Gamma_n$. The problem of bounding $G(n)$ is closely related to questions in deterministic primality testing and has been studied both pointwise and on average.

Assuming the Generalized Riemann Hypothesis, Montgomery \cite{monties} proved that $G(n)\ll (\log n)^2$, and Bach \cite{ebach} later obtained an effective version of this estimate. 
Bach~\cite{ebach} established under the same assumption that $G(n)<2(\log n)^2$ for $n\ge 2$. 
Lamzouri, Li and Soundararajan~\cite{LLS15} proved, also under GRH, that if $n\ge 3000$ and $n$ has no prime divisors below $(\log n)^2$, then we even have $G(n)\le (\log n)^2$. 
Unconditionally, Bach and Huelsbergen \cite{BH} obtained a bound of order essentially $n^{1/2}$, which was subsequently improved by Burthe \cite{Bu1} and then by Norton \cite{Norton1998}, who proved that $ G(n)\ll_\varepsilon n^{1/(4\sqrt e)+\varepsilon}.$ These pointwise estimates are closely connected with bounds for the least integer on which a nonprincipal Dirichlet character takes a value different from $0$ and $1$.

The average value is expected to be considerably smaller.
Bach and Huelsbergen \cite{BH} conjectured, in particular, that
\[
    \frac1x\sum_{n\leq x}G(n)
    \sim
    (\log\log x)\log\log\log x.
\]
The strongest unconditional result previously available in this direction is due to Burthe \cite{Burthe1997}, who used zero-density estimates for Dirichlet $L$-functions to prove $\sum_{n\leq x}G(n)
    \ll
    x(\log x)^{97}.$
    
Our first main result substantially reduces the power of the logarithm
in Burthe's mean-value estimate, bringing it much closer to the
conjectured asymptotic of Bach and Huelsbergen.
\begin{theorem}\label{thm:main}
For every fixed real number $A>8/3$,
\[
    \sum_{n\leq x}G(n)
    \ll_A x(\log x)^A
    \qquad (x\geq2).
\]
\end{theorem}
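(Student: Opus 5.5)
The plan rests on the standard reduction from generation to characters. The integers $a\le y$ with $(a,n)=1$ fail to generate $\Gamma_n$ exactly when they lie in a proper subgroup $H<\Gamma_n$. Every proper subgroup is contained in a maximal one, and a maximal subgroup has prime index $\ell$. So there is a nonprincipal character $\chi$ mod $n$, of prime order $\ell$, with $\chi(a)=1$ for all $a\le y$ coprime to $n$. In particular $\chi(p)=1$ for every prime $p\le y$ with $p\nmid n$, so $G(n)>y$ forces such a $\chi$ to exist. I would then write
\[
\sum_{n\le x}G(n)\le x(\log x)^A+\sum_{k\ge0}2^{k+1}y_k\,\#\{n\le x: G(n)>y_k\},
\]
with dyadic $y_k=2^k(\log x)^A$ running up to the pointwise bound $n^{1/(4\sqrt e)+\eps}$ of Norton. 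The whole problem becomes an upper bound for $N(x,y):=\#\{n\le x:\exists\,\chi \bmod n \text{ of prime order, nonprincipal, } \chi(p)=1\ \forall p\le y,\ p\nmid n\}$ that beats $x/y$ with a little room, once $y\ge(\log x)^{A}$.

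To bound $N(x,y)$ I would pass to primitive characters. Each such $\chi$ is induced by a primitive $\chi^*$ of conductor $q\mid n$ and prime order. Moreover $\chi^*(p)=1$ for all $p\le y$ except possibly the at most $\omega(n)\ll\log x$ primes dividing $n$. The count of $n$ with a given $q$ is at most $x/q$. So
\[
N(x,y)\ll \sum_{q}\frac{x}{q}\cdot\#\{\chi^* \bmod q \text{ primitive, prime order, } \chi^*(p)=1 \text{ for most } p\le y\}.
\]
I would split $q$ dyadically, $q\sim Q$, and treat three ranges.

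\emph{Small $Q$} ($Q\le y^{c}$): use a large-sieve/Burgess-type fact that a nonprincipal character mod $q$ cannot be $1$ on all primes up to $q^{\theta}$ outside a small exceptional set. This gives at most $O(\log x)$ exceptional moduli in total.

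\emph{Medium and large $Q$}: apply the multiplicative large sieve to the polynomial $\sum_{p\le y}(1-\chi(p))$, or better to a power $\big(\sum_{m\le y^r}\cdots\big)$ built from $y$-smooth numbers. Any $\chi$ with $\chi(p)=1$ for $p\le y$ has $\big|\sum_{m\le Y,\,P^+(m)\le y}\chi(m)\big|=\Psi(Y,y)$. The large sieve then yields
\[
\#\{\chi \bmod q,\ q\sim Q\}\ll \frac{(Q^2+Y)\,\Psi(Y,y)}{\Psi(Y,y)^2}.
\]
Choosing $Y\approx Q^2$ and $Y=y^{u}$, with $\Psi(Y,y)=Y\rho(u)\approx Y u^{-u}$, gives a count like $u^{u}$. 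Weighting by $x/Q$ and summing over $Q$ costs a factor $\log x$.

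For characters of prime order $\ell\ge3$ I would save further by counting $\chi$ together with $\chi^j$: all share the property, giving a factor $1/(\ell-1)$. For quadratic $\chi$ I would use Heath-Brown's quadratic large sieve instead. The exponents $8/3$ should come from optimizing $u$ against $y=(\log x)^A$. Taking $Q\le x$, $\log Q\approx u\log y$, the condition that $u^u$ be at most about $y(\log x)^{-1-\eps}$ is presumably where $A>8/3$ enters, after accounting for the weight $2^{k+1}y_k$ and the $\omega(n)$ exceptional primes.

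The main obstacle is the large-conductor range $Q$ near $x$. There the plain large sieve with $y=(\log x)^{A}$ gives only $u\approx\log x/(A\log\log x)$, and $u^{u}$ is then a power of $x$, which is useless. For this range I would instead use the cheap bound: the number of $n\le x$ divisible by a modulus $q$ carrying such a character is $\ll x\cdot\#\{\text{bad }q\sim Q\}/Q$. I would then estimate the number of bad $q$ through a zero-density/log-free estimate, or a large-sieve bound for $\sum_{p\le y}$ of fourth moment type. The aim is density $\ll Q\,y^{-1}(\log x)^{-1-\eps}$, matching the $8/3$ threshold via the available large-sieve exponent (e.g.\ $Q^{2}$ versus $y^{3/4}$-type savings). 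Making this balance close at exactly $8/3$ is where I expect the real work.
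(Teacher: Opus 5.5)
There is a genuine gap. Your first step (a prime-order $\chi$ with $B(\chi)=G(n)$, induced by a primitive $\chi^*\bmod q$ of the same order) matches the paper. The trouble is the core counting step: you credit each exceptional primitive character of conductor $q$ with all $x/q$ of its multiples, and then try to make the number of such characters small. That cannot reach $8/3$.

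If $G(n)>y$, Burthe's lift $B(\chi^*\chi_{0,n})\ll q(\log n)^2$ only forces $q\gg y/(\log x)^2$. A single exceptional character with $q\asymp y/(\log x)^2$ then contributes $\asymp y\cdot x/q\asymp x(\log x)^2$ to your layer-cake sum. The large sieve bounds the number of such characters only by a power of $\log y$, and the tools you list do not exclude even one of them. There are $\asymp\log x$ dyadic levels $y_k$ between $(\log x)^A$ and Norton's pointwise bound. So even one such character per level gives a bound of order $x(\log x)^3$, which already fails for $8/3<A<3$.

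Your diagnosis of the obstacle is also off. For $q$ near $x$, the large sieve applied to $h$-th powers of $\sum_{Y/2<p\le Y}\psi(p)$ gives $\ll Q^{2/A+\eps}$ characters with $Q\le q<2Q$, which is harmless there. The real problem is that these counts are not summable against a per-character weight of size $x(\log x)^2$.

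The missing idea is a power saving over the multiples of a fixed conductor. Fix $\psi$ primitive of prime order and conductor $d$, and write $n=dN$.
\begin{itemize}
\item \emph{Lower bound.} If $B(\psi\chi_{0,n})>H$ with $H\ge C_0\log x$, then $S(N)=\sum_{m\le H,\,(m,N)=1}\psi(m)=\#\{m\le H:(m,n)=1\}\gg H/\log H$.
\item \emph{Upper bound.} M\"obius inversion and Norton's Burgess-type bound for prime-order characters (no cubefree hypothesis needed) give $|S(N)|\ll H^{1-1/r}d^{(r+1)/(4r^2)+\eps}F_r(N)$, where $F_r(N)=\sum_{g\mid N}g^{-1+1/r}$.
\item \emph{Moments.} Every fixed moment of $F_r$ is $O(1)$ on average, so only few $N$ are exceptional.
\item \emph{Per-character bound.} Taking $H=d^{\theta}$ and using Burthe's lift on the exceptional $N$ gives $\sum_{n\le x,\,d\mid n}B(\psi\chi_{0,n})\ll x(\log x)^2d^{-\lambda+\eps}$, with $\lambda$ arbitrarily close to $3/4$.
\end{itemize}
Against the dyadic count $Q^{2/A+\eps}$, this sums over $Q$ exactly when $2/A<3/4$. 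That is where $8/3$ comes from; your proposal only guesses at the threshold.

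Two smaller points. First, the smooth-number variant fails as stated. $\chi^*(p)$ is unconstrained at primes $p\mid n$, which may be $2,3,5,\dots$, and their multiples make up a positive proportion of $y$-smooth $m$. So $|\sum_{m\le Y,\,P^+(m)\le y}\chi^*(m)|$ need not be close to $\Psi(Y,y)$, and you cannot make the coefficients independent of $n$, as the large sieve requires. The paper avoids this by using only primes in $(Y/2,Y]$, at most $\log x/\log(Y/2)$ of which can divide $n$. Second, your claim of ``$O(\log x)$ exceptional moduli'' for small $Q$ is unsupported; Burthe's lift removes small conductors outright.
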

In particular, one can take any fixed exponent larger than $8/3$. For instance
\[
    \sum_{n\leq x}G(n)
    \ll
    x(\log x)^3.
\]
Our second main result concerns the size of $G(n)$ for almost all
integers.
\begin{theorem}\label{thm:almost-all-new}
Let $c>4/e^2$ and $M>0$ be fixed. Then, for all sufficiently large $x$,
\[
    \#\left\{
        3\leq n\leq x:
        G(n)>c(\log n)^2
    \right\}
    \ll_{c,M}
    \frac{x}{(\log\log x)^M}.
\]
\end{theorem}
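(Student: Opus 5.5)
We reduce $G(n)>y$ to the existence of a nontrivial character that is trivial on small integers, and count such $n$ with a large sieve. Set $y=c(\log x)^2$; since $c(\log n)^2\ge c(\log x)^2/(1+o(1))^2$ for $n\ge x/\log x$, it suffices to treat such $n$ with $y$ replaced by a slightly smaller multiple of $(\log x)^2$. If $G(n)>y$, then the integers $a\le y$ with $(a,n)=1$ generate a proper subgroup $H$ of $\Gamma_n$. Hence there is a nonprincipal character $\chi$ mod $n$, trivial on $H$, with $\chi(p)=1$ for all primes $p\le y$, $p\nmid n$. Passing to the primitive character inducing $\chi$, of conductor $q\mid n$, $q>1$, we get a primitive $\chi^*$ mod $q$ with $\chi^*(p)=1$ for all $p\le y$ not dividing $n$.

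We first discard bad $n$. The set of $n\le x$ having more than $K\log\log\log x$ prime factors below $y$, or a prime factor $p\le y$ occurring with large multiplicity, has size $\ll x/(\log\log x)^M$ once $K=K(M)$ is large (Erdős–Kac / Hardy–Ramanujan tails). For good $n$, the exceptional primes $p\mid n$, $p\le y$, remove only a sparse set. So $\chi^*$ satisfies $\chi^*(p)=1$ for all $p\le y$ outside a set $E$ of at most $K\log\log\log x$ primes.

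The main step is to count the conductors $q$ admitting such a $\chi^*$. We would use the multiplicative large sieve: for every $S$-smooth-type weight supported on $y$-friable integers coprime to $E$,
\[
\sum_{q\le Q}\ \sideset{}{^*}\sum_{\chi\bmod q}\Bigl|\sum_{m\le N}a_m\chi(m)\Bigr|^2\le (N+Q^2)\sum|a_m|^2 .
\]
Take $a_m=1$ on $y$-smooth $m\le N$ avoiding $E$, so each bad $\chi^*$ contributes $\Psi_E(N,y)^2$. This shows the number of bad primitive characters with $q\le Q$ is at most $(N+Q^2)/\Psi_E(N,y)$. With $y=c(\log x)^2$ and $N=Q^{2}$, de Bruijn gives $\Psi(N,y)=N^{1-1/(2\sqrt{c})\cdot(1+o(1))}$-type behaviour when $\log N\asymp\log x$. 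The condition $c>4/e^2$ is precisely what the optimization over $N=x^{\theta}$ should need. Concretely, $\Psi(x^{u\theta},x^{\theta/u})$ with $y=(\log N)^{a}$ gives $\Psi(N,(\log N)^a)=N^{1-1/a+o(1)}$. We would instead use Rankin-type lower bounds together with Linnik's/Gallagher's method at the scale where $\log N\approx\sqrt{y}$, the regime in which $\Psi(N,y)$ is of size $N^{\delta}$ with optimal $e$-dependence. This yields that the number of bad $q\le x$ is $\le x^{o(1)}$, or at least that $\sum_{q\text{ bad}}1/q\ll(\log\log x)^{-M}$.

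Then we sum over conductors: the number of $n\le x$ divisible by some bad $q$ is $\le\sum_{q\text{ bad}}x/q$. Small $q$ (up to $(\log\log x)^{M'}$) need separate treatment. For fixed $q$, $\chi^*(p)=1$ for all $p\le y$ except $E$ is impossible when $y$ is large and $|E|$ is small, since the primes $\le y$ outside $E$ generate $\Gamma_q$ for $q\le y^{1/2}$, say by elementary counting. We expect the main obstacle to be this large-sieve/smooth-number optimization: extracting the exact constant $4/e^2$ (presumably from $\Psi$ counted in the range $\log N\sim \tfrac{e}{2}\sqrt{y}$-ish, that is, $u\log u$ balancing) while keeping the loss from the exceptional set $E$ and from the passage $n\mapsto q$ within a factor $(\log\log x)^{-M}$. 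If the direct large sieve is too weak near $q\approx x$, we would fall back on a zero-density estimate, as Burthe does, for the range of large conductors.
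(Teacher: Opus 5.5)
Your outer architecture agrees with the paper's. You take a nonprincipal character trivial on the primes $p\le y$ with $p\nmid n$, pass to its primitive inducer of conductor $q\mid n$, bound the count by $x\sum_q(\text{number of bad characters mod }q)/q$, treat very small $q$ separately, and use a multiplicative large sieve for the rest. But the estimate that carries the theorem is never established, and the route you sketch does not work as written.

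\textbf{The $n$-dependent weights.} Your weights $a_m=1$ on $y$-smooth $m\le N$ coprime to $E$ depend on $n$ through $E$. So the bad characters coming from different $n$ cannot all be inserted into one large-sieve inequality. Grouping them by $E$ multiplies the bound by about $\pi(y)^{K\log\log\log x}=(\log x)^{O(\log\log\log x)}$. That exceeds $Q$ itself for $Q\le(\log x)^{O(1)}$, so nothing survives in $\sum_q(\cdot)/q$ there. If instead you drop coprimality to $E$, then $\chi^*(m)\ne1$ on a positive proportion of the $y$-smooth $m$ (the saddle point here is $\alpha\approx\frac12$), and the lower bound $\Psi_E(N,y)^2$ is lost.

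The paper avoids this entirely. Since $\omega(n)\le L/\log2=o(P)$, with $L=\log x$ and $P=\pi(Y)\asymp L^2/\log L$, every exceptional $n$ yields the $n$-independent linear condition $\bigl|\sum_{p\le Y}\psi(p)\bigr|\ge(1-\delta_x)P$ with $\delta_x=O(\log L/L)$. No Erd\H{o}s--Kac discarding is needed. Only afterwards is this prime sum raised to the $h$-th power, so multiplicativity enters only through $\psi(p_1\cdots p_h)=\psi(p_1)\cdots\psi(p_h)$.

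\textbf{The missing quantitative core.} The step that produces $4/e^2$ is left as an expectation, and your heuristics are off. When $\log Q\asymp\log x$, with $N=Q^2$ one has $y\asymp(\log N)^2$. Hence $\Psi(N,y)=N^{1/2+o(1)}$, not $N^{1-1/(2\sqrt c)}$, and $(N+Q^2)/\Psi(N,y)=Q^{1+o(1)}$. So there is no saving at the level of exponents, and certainly not $x^{o(1)}$ bad conductors; everything lives in a second-order term.

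The paper takes $h=\lceil 2\log(2Q)/\log Y\rceil$ and uses $\sum_m a_h(m)^2\le h!P^h$ to get $R(Q)\le 2h!\bigl(Y/((1-\delta_x)^2P)\bigr)^h$. Stirling then gives $\log R(Q)\le\frac{2U}{V}\bigl(\log(2U)-1+o(1)\bigr)+O(\log L)$, where $U=\log 2Q\le L+\log2$ and $V=2\log L+\log c$. Hence $\log R(Q)\le U-\frac{a_cU}{4\log L}+O(\log L)$ with $a_c=1+\frac12\log c-\log2$, which is positive exactly when $c>4/e^2$.

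This saving has the same order as the relative error $O(1/\log y)$ in de Bruijn-type asymptotics for $\log\Psi$. So a smooth-number version would need an exact combinatorial count of this kind anyway.

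In the paper, the $(\log\log x)^{-M}$ rate then comes from the dyadic blocks just above the Siegel--Walfisz range $d\le(\log Y)^{M+7}$, where only $R(Q)\ll(\log L)^5$ is available. No zero-density input is needed at any conductor size.
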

Since $4/e^2<1$, this immediately yields the following bound for almost all integers.
\begin{corollary}\label{cor:almost-all-new}
Unconditionally,
\[
    G(n)\leq(\log n)^2
\]
for almost all positive integers $n$.
\end{corollary}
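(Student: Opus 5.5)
The corollary follows from Theorem~\ref{thm:almost-all-new} by one choice of constant and a trivial density argument. The key observation is that $e^2>4$, so $4/e^2<1$. I will fix any $c$ with $4/e^2<c<1$, say $c=\tfrac12(1+4/e^2)$, and take $M=1$.

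Theorem~\ref{thm:almost-all-new} then says that for all sufficiently large $x$ the exceptional set
\[
E(x):=\{3\le n\le x:\ G(n)>c(\log n)^2\}
\]
satisfies $\#E(x)\ll x/\log\log x$. Hence $\#E(x)/x\to0$ as $x\to\infty$.

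Since $c<1$, every $n\ge3$ with $G(n)>(\log n)^2$ also satisfies $G(n)>c(\log n)^2$. So the set of $n\le x$ violating $G(n)\le(\log n)^2$ is contained in $E(x)\cup\{1,2\}$. This set has size $o(x)$, which is exactly the statement that $G(n)\le(\log n)^2$ holds for almost all $n$, i.e.\ on a set of natural density $1$.

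The small cases $n=1,2$, where $\Gamma_n$ is trivial and $G(n)$ is undefined or conventional, form a finite set and do not affect density. There is no real obstacle here. The only points to check are the numerical inequality $4/e^2\approx0.541<1$, and that the implied constant, which depends on $c$ and $M$, is harmless once $c$ and $M$ are fixed.
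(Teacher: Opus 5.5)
Your proposal is correct and is essentially the paper's own deduction: since $4/e^2<1$, Theorem~\ref{thm:almost-all-new} applies with some admissible $c\le 1$, which gives an exceptional set of size $O_{c,M}\bigl(x/(\log\log x)^M\bigr)=o(x)$. Your intermediate choice $c<1$ works, though $c=1$ is already admissible and would give the containment directly.
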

Thus, unconditionally and for almost all integers $n$, we obtain the
same logarithmic exponent $2$ as in Montgomery's pointwise bound under
the Generalized Riemann Hypothesis.

A complementary quantitative estimate is available when one considers
values larger than $(\log x)^2$.

\begin{corollary}\label{cor:quantitative-almost-all}
Let $A>2$.  Then, for every
\[
    0<\eta<\frac{(A-2)^2}{A},
\]
one has
\[
    \#\{n\leq x:G(n)>(\log x)^A\}
    \ll_{A,\eta}
    \frac{x}{(\log x)^\eta}.
\]
\end{corollary}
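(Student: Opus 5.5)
The plan is to run the argument behind Theorem~\ref{thm:almost-all-new} with the threshold $y=(\log x)^A$, $A>2$, in place of $c(\log n)^2$, and to track the saving quantitatively instead of only showing it is $o(x)$. The reduction is the standard one. If $G(n)>y$, the subgroup of $\Gamma_n$ generated by the $a\le y$ with $(a,n)=1$ is proper. So there is a nonprincipal character $\chi$ mod $n$, which I will take of prime order, with $\chi(a)=1$ for all such $a$. Let $\chi^*$ be the primitive character inducing $\chi$, and let $q\mid n$, $q>1$, be its conductor. Then $\chi^*(m)=1$ for every $y$-smooth $m$ all of whose prime factors are coprime to $n$.

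\textbf{Step 1: small primes of $n$.} Let $\omega_y(n)$ be the number of primes $p\le y$ dividing $n$. I would first discard the $n$ for which the primes $p\le y$ dividing $n$ remove too much mass from the $y$-smooth numbers. Concretely, remove those $n$ with $\sum_{p\mid n,\,p\le y}p^{-1/2}$ (or $\omega_y(n)$) larger than a parameter $\lambda\log\log x$. A Rankin/Poisson tail bound, via $\sum_{n\le x}z^{\omega_y(n)}\ll x(\log y)^{z-1}$, controls the discarded set by $x(\log x)^{-Q(\lambda)}$ with $Q$ an explicit convex function. This is the step where I expect the exponent $(A-2)^2/A$ to arise, once $\lambda$ is optimized against Step~2.

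\textbf{Step 2: large sieve count of exceptional conductors.} For the remaining $n$, group by the conductor $q\in(Q,2Q]$. Apply the multiplicative large sieve
\[
\sum_{q\le 2Q}\ \sideset{}{^*}\sum_{\chi\bmod q}\Bigl|\sum_{m\le N}a_m\chi(m)\Bigr|^2\le (N+4Q^2)\sum_{m\le N}|a_m|^2,
\]
with $a_m$ the indicator of $y$-smooth $m\le N$ avoiding a fixed set $\mathcal P$ of small primes. To handle the dependence on $n$, I would split the $n$ according to $\mathcal P=\{p\le y: p\mid n\}$, or majorize by a sieve weight. Each exceptional $\chi^*$ contributes $\Psi_{\mathcal P}(N,y)^2$ to the left-hand side. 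Hence the number of exceptional conductors in $(Q,2Q]$ is
\[
\ll (N+Q^2)/\Psi_{\mathcal P}(N,y).
\]
Take $N=Q^2$. By de Bruijn's estimate, with $u=\log N/\log y$, one has $\Psi(Q^2,y)=Q^{2}u^{-u(1+o(1))}$. Removing $\mathcal P$ costs a factor at most $(\log x)^{O(\lambda)}$ on the good set. Characters of conductor $q\le y$ cannot be exceptional, since some prime $p<q$ gives $\chi^*(p)\ne1$. So only $Q\ge y$ matter.

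\textbf{Step 3: summation over $n$.} The number of good exceptional $n\le x$ is at most
\[
\sum_{Q\ \text{dyadic},\ y\le Q\le x}\ \frac{x}{Q}\cdot\#\{\text{exceptional }q\in(Q,2Q]\}.
\]
For $Q=x^{\theta}$ the count is $\ll Q^{2/A+o(1)}$ when $N=Q^2$ is a power of $x$, because then $\Psi(x^{2\theta},(\log x)^A)=x^{2\theta(1-1/A)+o(1)}$. The resulting contribution $xQ^{2/A-1+o(1)}$ is power saving, using $A>2$. For $Q=(\log x)^B$ with $B\ge A$, the count is $\ll u^{u}(\log x)^{O(\lambda)}$ with $u=2B/A$, giving $x(\log x)^{-B+O(\lambda)}u^{u}$, which is acceptable. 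The main obstacle is the intermediate range $\log Q\asymp(\log\log x)^{C}$, together with balancing the $(\log x)^{O(\lambda)}$ loss from $\mathcal P$ against the tail $x(\log x)^{-Q(\lambda)}$ from Step~1. There I would choose $N$ slightly larger than $Q^2$ to improve $u^{u}$, and then optimize $\lambda$. I expect this optimization to produce exactly the admissible range $\eta<(A-2)^2/A$. If the bookkeeping gives a worse constant, the fallback is to quote the quantitative form of the bound established in the proof of Theorem~\ref{thm:almost-all-new}, with $y=(\log x)^A$ substituted.
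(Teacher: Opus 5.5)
There is a genuine gap, and it sits where the exponent is decided: the lower bound for the conductor. Your claim that conductors $q\le y$ cannot be exceptional ``since some prime $p<q$ gives $\chi^*(p)\ne1$'' does not follow. Exceptionality only says $\chi(a)=1$ for $a\le y$ with $(a,n)=1$. The prime $p$ you find may divide $n$; then $\chi(p)=0$ and there is no contradiction. For arbitrary $n$ the correct input is Burthe's lift bound, Lemma~\ref{lem:BurtheLift}: $y<B(\chi^*\chi_{0,n})\le C_B\,q(\log n)^2$. This forces only $q>D_x$, where $D_x=C_B^{-1}(\log x)^{A-2}$. That is the source of the factor $A-2$ in $(A-2)^2/A=(A-2)(1-2/A)$. (On your Step-1-good set there is a possible repair. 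Shift a witness $b<q$ with $\chi^*(b)\ne1$ along $b+jq$ past the at most $\lambda\log\log x$ primes $p\le y$ dividing $n$, using a polynomial bound for Jacobsthal's function, to get $q\ge y(\log\log x)^{-O(1)}$. But that is a different argument, and you would have to supply it.)

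Step 3 is also left unfinished, and your expectation about where the exponent comes from is misplaced. There is no trade-off in $\lambda$. Since $\sum_{p\le y}1/p\sim\log\log\log x$, your own Rankin bound with a fixed large $z$ makes the discarded set $\ll_z x(\log\log x)^{z}(\log x)^{-\lambda\log z}$, which is smaller than any fixed power of $\log x$.

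Nor is the intermediate range an obstacle. Let $v_q(x)$ be the number of primitive $\psi\bmod q$ with $|\sum_{y/2<p\le y}\psi(p)|\ge\frac12(\pi(y)-\pi(y/2))$. Raise this prime sum to a power $h$ with $y^h\ge(2Q)^2$ and apply the large sieve. This gives $\sum_{Q\le q<2Q}v_q(x)\ll Q^{\rho}$ for every $\rho>2/A$, uniformly for $(\log x)^{A-2}\ll Q\le x$ (Lemma~\ref{lem:large-sieve-exceptional}). The paper then concludes in one line: $\#\{n\le x:G(n)>y\}\le x\sum_{q\ge D_x}v_q(x)/q\ll xD_x^{\rho-1}$, and letting $\rho\downarrow2/A$ gives exactly $\eta<(A-2)^2/A$.

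Restricting to primes in $(y/2,y]$ also settles the $n$-dependence of your weights, which you leave open. At most $\log x/\log(y/2)=o(\pi(y)-\pi(y/2))$ of these primes divide $n$. So the coefficient sequence is independent of $n$, and Step 1 is unnecessary. Splitting by $\mathcal P$ without weighting each class by $1/\prod_{p\in\mathcal P}p$ would instead lose the number of possible sets $\mathcal P$, which is not bounded by any power of $\log x$.

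Finally, the fallback to the proof of Theorem~\ref{thm:almost-all-new} cannot give a power of $\log x$. Its small-conductor input is Siegel--Walfisz, which excludes only $q\le(\log y)^{B}$, so it saves only $(\log\log x)^{-M}$.
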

We briefly describe the main ideas of the proofs.  The proof of
Theorem~\ref{thm:main} has two ingredients.  First, a high-moment
refinement of Burthe's argument, combined with Norton's character-sum
estimate, gives a saving in the primitive conductor which can be made
arbitrarily close to $3/4$.  Second, if $G(n)$ is large, then an
associated primitive character takes the value $1$ at almost all primes
in a suitable interval.  Raising the corresponding prime sum to a high
power and applying the multiplicative large sieve gives a dyadic bound
for the number of such exceptional primitive characters.  Balancing
the two estimates leads to the condition $A>8/3$.

For the almost-all result, we keep the more precise information in
the high-moment large-sieve estimate instead of replacing it by a
less precise power bound in the conductor.  This allows us to reach the
endpoint logarithmic exponent $2$.  Away from this endpoint, the
simpler dyadic estimate already gives the stronger exceptional-set
decay stated in Corollary~\ref{cor:quantitative-almost-all}.

It is also useful to compare the problem with the case of prime
moduli. Pollack \cite{Pollack2012} proved
\[
    \sum_{p\leq x}G(p)
    \ll
    \pi(x)(\log x)^2.
\]
For prime moduli, every nonprincipal character is primitive, so a large-sieve estimate for exceptional characters directly counts the exceptional moduli.  For general $n$, however, the character attached to $G(n)$ is typically induced from a primitive character of a smaller conductor $d$ dividing $n$.  After counting the exceptional primitive characters, one must therefore also control their contribution over all multiples of $d$.  This additional step is handled by the Norton/high-moment argument and is the source of the extra loss in the exponent in the present argument.

\section{Notation and background}

For a nonprincipal Dirichlet character $\chi$, define
\[
 B(\chi)\coloneq 
 \min\{a\geq1:\chi(a)\notin\{0,1\}\}.
\]
If $\chi$ is principal, put $B(\chi)=0$.
If $n$ is a positive integer, write $\chi_{0,n}$ for the principal character
modulo $n$. We use the following estimate of Burthe.

\begin{lemma}[\cite{Burthe1997}, Lemma 3.1]\label{lem:BurtheLift}
Let $n\ge 2$ be an integer.
Let $\psi$ be a primitive character modulo $d$, with $d$ a divisor of $n$.
Then
\begin{equation}\label{eq:BurtheLift}
    B(\psi\chi_{0,n})\ll d(\log n)^2,
\end{equation}
with an absolute implied constant.
\end{lemma}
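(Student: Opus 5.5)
The idea is to reduce the bound for $B(\psi\chi_{0,n})$ to a short-interval problem for the primitive character $\psi$ alone, with the coprimality condition to $n$ handled by a sieve. Put $\chi=\psi\chi_{0,n}$ and let $X\ge 2$ be a parameter. Suppose that $B(\chi)>X$. Then for every $a\le X$ with $(a,n)=1$ we have $\psi(a)=\chi(a)=1$. We compare two evaluations of the weighted sum
\[
S(X)=\sum_{\substack{a\le X\\(a,n)=1}}\psi(a)\Bigl(1-\frac aX\Bigr).
\]

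Under the hypothesis $B(\chi)>X$, every term with $(a,n)=1$ has $\psi(a)=1$. Therefore
\[
S(X)=\sum_{\substack{a\le X\\(a,n)=1}}\Bigl(1-\frac aX\Bigr)=\frac{\varphi(n)}{n}\frac X2+O\bigl(\tau(n)\bigr).
\]
Since $\varphi(n)/n\gg 1/\log\log n$, this is $\gg X/\log\log n$ once $X$ is somewhat larger than $\tau(n)\log\log n$.

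For the second evaluation we detect $(a,n)=1$ by Möbius inversion over $e\mid n$. Only squarefree $e$ with $(e,d)=1$ survive, because $\psi(e)=0$ when $(e,d)>1$. This gives
\[
S(X)=\sum_{\substack{e\mid n\\(e,d)=1}}\mu(e)\psi(e)\sum_{b\le X/e}\psi(b)\Bigl(1-\frac{eb}{X}\Bigr).
\]
The inner smoothed sum of a primitive nonprincipal character modulo $d$ is $O(\sqrt d\log d)$ by Pólya–Vinogradov. A cleaner route is to expand $\psi$ via Gauss sums, which gives $\bigl|\sum_{b\le Y}\psi(b)(1-b/Y)\bigr|\ll d/Y$, or at worst $\ll\sqrt d$ uniformly. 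Using the trivial bound $\ll Y$ as well, each term is $\ll\min(X/e,\sqrt d\log d)$. The divisor sum then contributes at most $2^{\omega(n)}\sqrt d\log d$, which is too lossy.

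To avoid the $2^{\omega(n)}$ loss, I would instead truncate the sieve. Write $n=n_1n_2$, where $n_1$ collects the prime factors $p\le z$ and $n_2$ the prime factors $p>z$. Apply Möbius inversion only to $n_1$, and discard the $a$ divisible by a prime $p>z$, $p\mid n$. The discarded terms number at most $\omega(n)X/z\le X\log n/z$. Taking $z\asymp(\log n)^{2}$ makes this error $\le X/(4\log\log n)$ for the relevant $X$. The $n_1$ part has at most $2^{\pi(z)}$ divisors, but the terms with $e>X/\sqrt d$ are bounded trivially by the count $X/e$. The remaining sum is controlled by $\sum_{e\mid n_1}1/e\ll\log z$, which gives a total of roughly $\sqrt d\,\log d\,\log\log n$ plus small errors.

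Comparing the two evaluations yields a contradiction once $X\ge C\sqrt d(\log n)^2$. This gives the bound, indeed something stronger than $d(\log n)^2$. Since the lemma only claims $d(\log n)^2$, the proof may be simplified: take $X=Cd(\log n)^2$ and use the trivial fact that $\psi$ has mean zero over complete periods of length $d$. Then each inner sum over $b\le X/e$ is $O(d)$, and the only work is the bookkeeping of divisors.

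The main obstacle is exactly this divisor bookkeeping, namely controlling the number of squarefree $e\mid n$ with $(e,d)=1$ without losing $2^{\omega(n)}$. The approach I would try first is the truncation at $z=(\log n)^2$ together with $\sum_{e\mid n_1}1/e\le\prod_{p\le z}(1+1/p)\ll\log z$.
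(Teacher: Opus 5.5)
The paper does not prove this lemma; it quotes it from Burthe. Judged on its own, your argument has a genuine gap, and it is exactly the step you flag as the main obstacle. Bounding the Möbius expansion term by term in absolute value cannot work, whether or not you truncate it.

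Take $n$ divisible by $d$ and by every prime $p\le y$, with $y\asymp\log n$. Then $n_1$ is divisible by $\prod_{p\le y}p$, so your main term is at most $\asymp X\varphi(n_1)/n_1\ll X/\log y$. Your cutoff is $W=X/\sqrt d$ (or $X/d$ in the simplified version), so $W\asymp(\log n)^2\asymp y^2$.

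\begin{itemize}
\item The terms with $e>W$, bounded by $X/e$, contribute $X\sum_{e\mid n_1,\,e>W}1/e$. This is $\gg X\log y$. Already the $e=p_1p_2p_3m$ with distinct primes $p_i\in(y^{3/4},y]$ and $m\mid\prod_{p\le y^{3/4}}p$ all exceed $y^{9/4}>W$, and they give $\gg\prod_{p\le y^{3/4}}(1+1/p)\gg\log y$.
\item The terms with $e\le W$ are no better. There are $\gg W$ squarefree $y$-smooth $e\le W$, each bounded only by $\sqrt d\log d$ (resp.\ $d$), for a total $\gg X$.
\item The estimate $\sum_{e\mid n_1}1/e\ll\log z$ controls neither piece.
\item Your first evaluation has the same defect: its error $O(\tau(n))$ can be as large as $\exp(c\log n/\log\log n)$, far beyond $d(\log n)^2$ when $d$ is small.
\end{itemize}

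So the two evaluations never contradict each other, and the claimed improvement to $\sqrt d(\log n)^2$ is unsupported.

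The missing idea is that no cancellation in $\psi$ is needed at all. You only need one integer coprime to $n$ in a single residue class mod $d$, which is a Jacobsthal problem.
\begin{itemize}
\item The case $d=1$ is trivial. Otherwise pick $1\le b<d$ with $(b,d)=1$ and $\psi(b)\ne1$, and put $m=\prod_{p\mid n,\,p\nmid d}p$.
\item For $p\mid m$, $d$ is invertible mod $p$, so $p\mid b+dk$ for exactly one class $k\equiv r_p\pmod p$.
\item Choose $c\equiv-r_p\pmod p$ for all $p\mid m$ by the Chinese remainder theorem. The condition $(b+dk,m)=1$ becomes $(k+c,m)=1$, which holds for some $0\le k<j(m)$, where $j(m)$ is Jacobsthal's function.
\item Then $a=b+dk$ is coprime to $n$ and $\psi(a)=\psi(b)\notin\{0,1\}$, so $B(\psi\chi_{0,n})\le a<d\,j(m)$.
\item Iwaniec's bound $j(m)\ll(\omega(m)\log\omega(m))^2$, together with $\omega(m)\le\omega(n)\ll\log n/\log\log n$, gives $j(m)\ll(\log n)^2$.
\end{itemize}

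The exponent $2$ sits exactly at the edge of the linear sieve: you are sifting an interval of length $\asymp(\log n)^2$ by all primes up to $\asymp\log n$, i.e.\ $s=2$. That is why Iwaniec's refinement is needed, and why a Möbius expansion with term-by-term bounds cannot reach it.
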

We also use Norton's character-sum estimate in a form for characters of
prime order.  \begin{lemma}[\cite{Norton1998}, Thm.\,1.6]\label{norton}
    Let $\psi$ be a nonprincipal Dirichlet character modulo $d$ such that
$\psi^k$ is principal. Then, for every positive integer $r$ and every $\eps>0$,
\begin{equation}\label{eq:Norton-general}
 \left|\sum_{M<m\leq M+H}\psi(m)\right|
 \ll_{\eps,r}
 R_k(d)^{1/r}
 H^{1-1/r}
 d^{(r+1)/(4r^2)+\eps},
\end{equation}
where
\[
 R_k(d)=\min\{M(d)^{3/4},Q(k)^{9/8}\},
\]
with
\[
M(n)  =\prod_{p^a \| n, a \geq 3} p^a, \qquad
Q(k)  =\prod_{p^a \| k, a \geq 2} p^a .\]
\end{lemma}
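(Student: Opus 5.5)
This is Norton's refinement of the Burgess bound. We take it from \cite{Norton1998} and do not reprove it here; what follows is how we would reconstruct it. We may assume $\psi$ is primitive modulo $d$. Indeed, passing from $\psi$ to the primitive character inducing it replaces $d$ by its conductor, and sieving out the coprimality condition by M\"obius inversion over divisors of $d$ costs only a factor $d^{\eps}$. This is harmless since $M(\cdot)$ and the exponent $d^{(r+1)/(4r^2)}$ are monotone along divisors.

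The core is the Burgess amplification. We write $S=\sum_{M<m\le M+H}\psi(m)$. For parameters $A,B$ with $AB\le H$, we replace $m$ by $m+ab$ with $a\le A$, $b\le B$. This gives $|S|\le (AB)^{-1}\sum_{a,b}\bigl|\sum_m\psi(m+ab)\bigr|+O(AB)$. Writing $m+ab\equiv a(\bar a m+b)$ and collecting the values $x\equiv\bar a m\pmod d$ with multiplicities $\nu(x)$, Hölder's inequality with exponents adapted to $2r$ reduces matters to two quantities. The first is the moments $\sum_x\nu(x)^2\ll AH\,d^{\eps}$ (a divisor-type count). The second is the $2r$-th moment
\[
\sum_{x\bmod d}\Bigl|\sum_{b\le B}\psi(x+b)\Bigr|^{2r}.
\]
Expanding the $2r$-th moment produces complete sums $\sum_x\psi\bigl(\prod_{i\le r}(x+b_i)\bigr)\overline{\psi}\bigl(\prod_{i\le r}(x+b'_i)\bigr)$. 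By the Chinese remainder theorem these factor into sums modulo the prime powers $p^a\,\|\,d$.

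The arithmetic input differs by the exponent $a$. For $a\le 2$, Weil's bound for $a=1$ and an elementary square-root bound for $a=2$ give $\ll r\,p^{a/2}$ for non-degenerate tuples. For $a\ge 3$, Postnikov/stationary-phase type evaluations give square-root cancellation only up to a loss depending on $p^a$. Collecting these losses over the primes with $a\ge 3$ produces the factor $M(d)^{3/4}$. Alternatively one uses the hypothesis $\psi^k=\chi_0$. For odd $p$, the $p$-component of $\psi$ then factors through $(\Z/p^a)^\times\to$ a cyclic group of order dividing $\gcd(k,\varphi(p^a))$, so it is trivial on $1+p^{v_p(k)+1}\Z$. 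The degeneracy in the Postnikov analysis is therefore controlled by $p^{v_p(k)}$, and only primes with $p^2\mid k$ lose anything. A similar argument applies at $p=2$ with a bounded adjustment. This gives the alternative factor $Q(k)^{9/8}$, and $R_k(d)$ is the minimum of the two factors. We thus aim for
\[
\sum_x\Bigl|\sum_{b\le B}\psi(x+b)\Bigr|^{2r}\ll_{r,\eps} d^{\eps}R_k(d)\bigl(B^r d+B^{2r}d^{1/2}\bigr),
\]
where the diagonal term $B^rd$ comes from tuples in which the $b_i$ pair up with the $b'_i$.

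Finally, we insert this bound and choose $B\asymp d^{1/(2r)}$ and $A\asymp H d^{-(r+1)/(4r)}$ in the standard Burgess way, which recovers $H^{1-1/r}d^{(r+1)/(4r^2)+\eps}R_k(d)^{1/r}$. The range $H\le d^{(r+1)/(4r)}$ is trivial. The main obstacle is the prime-power step: proving square-root cancellation modulo $p^a$ for the rational-function character sums, uniformly in the tuples, with exactly the losses $M(d)^{3/4}$ or $Q(k)^{9/8}$. Our first attempt would be to write $x=y+p^{\lceil a/2\rceil}z$, linearize $\psi$ via the $p$-adic logarithm on $1+p^{\lceil a/2\rceil}\Z_p$, and count the critical points of the resulting additive phase. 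The order-$k$ hypothesis enters by truncating the logarithm at level $p^{v_p(k)}$.
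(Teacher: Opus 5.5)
\emph{Verdict: as a citation your proposal matches the paper; read as a proof, your reconstruction has a genuine gap and one step that fails as written.}

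The paper gives no proof of this lemma. It is quoted from Norton's Theorem~1.6 and used as a black box, and only for $\psi$ of prime order, where $Q(k)=1$ and hence $R_k(d)=1$. Your opening sentence does the same, so to that extent you agree with the paper.

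The decisive input of your reconstruction is a $2r$-th moment bound modulo prime powers $p^a$ with $a\ge 3$. It would have to be uniform in the shift tuples and lose exactly $M(d)^{3/4}$ or $Q(k)^{9/8}$. You only assert it, and you flag it yourself as the main obstacle. It is also miscalibrated. A single factor $R_k(d)$ in front of $B^rd+B^{2r}d^{1/2}$ becomes $R_k(d)^{1/(2r)}$ after the $2r$-th root, not $R_k(d)^{1/r}$. So the inequality you aim for would give more than the lemma claims, which indicates it is not the estimate behind the stated factor. It is exactly the part left unproved.

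Independently, the amplification step fails as written. Take the trivial shift error $O(AB)$, $\sum_x\nu(x)^2\ll AHd^{\eps}$, and your choice $B\asymp d^{1/(2r)}$, $A\asymp Hd^{-(r+1)/(4r)}$. Then H\"older gives
\[
 \frac{1}{AB}(AH)^{1-\frac{1}{2r}}\bigl(B^rd+B^{2r}d^{1/2}\bigr)^{\frac{1}{2r}}d^{\eps}
 \asymp A^{-\frac{1}{2r}}H^{1-\frac{1}{2r}}d^{\frac{1}{4r}+\eps}
 =H^{1-\frac{1}{r}}d^{\frac{3r+1}{8r^2}+\eps},
\]
and $\frac{3r+1}{8r^2}>\frac{r+1}{4r^2}$ for $r\ge 2$. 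For $r=2$ you get $7/32$ instead of $3/16$. The Burgess exponent requires $A\asymp Hd^{-1/(2r)}$, i.e.\ $AB\asymp H$, and then the shift error can no longer be bounded trivially.

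The missing device is Burgess's induction on $H$:
\begin{itemize}
\item take $AB\le H/C$;
\item write $S(M,H)-S(M+ab,H)$ as a difference of two sums of length $ab\le H/C$;
\item bound these by the inductive hypothesis;
\item choose $C$ large in terms of $r$, so that this error is at most half of the target bound.
\end{itemize}
You must also restrict $a$ to integers coprime to $d$, e.g.\ primes not dividing $d$, for $\bar a$ to exist.

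Finally, the $Q(k)^{9/8}$ alternative needs no separate Postnikov analysis. Once $\psi$ is primitive, your own observation that its $p$-component is trivial on $1+p^{v_p(k)+1}\Z_p$ says directly that $v_p(d)\le v_p(k)+1$ for odd $p$. Similarly $v_2(d)\le v_2(k)+2$. So for odd $p$, $v_p(d)\ge 3$ forces $p^2\mid k$ and $p^{v_p(d)}\le p^{3v_p(k)/2}$, and the prime $2$ costs at most a factor $8$. Thus $M(d)\le 8\,Q(k)^{3/2}$, whence $M(d)^{3/4}\ll Q(k)^{9/8}$. This explains $\frac98=\frac34\cdot\frac32$: for primitive $\psi$ the minimum defining $R_k(d)$ is, up to a constant, always $M(d)^{3/4}$, and the $Q(k)$ branch only matters through your reduction from imprimitive characters. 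In particular, for the prime-order characters the paper uses, the conductor is cube-free apart from a possible factor $8$. What the paper really needs is therefore Burgess's bound for (essentially) cube-free moduli.
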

In particular, if $k$ is squarefree then
$Q(k)=1$, so $R_k(d)=1$.  Therefore, if $\psi$ has prime order,
then for every fixed $r\geq1$,
\begin{equation}\label{eq:Norton-prime}
 \left|\sum_{M<m\leq M+H}\psi(m)\right|
 \ll_{\eps,r}
 H^{1-1/r}
 d^{(r+1)/(4r^2)+\eps},
\end{equation}
with no cubefree hypothesis on $d$.\\
Finally, we use the standard multiplicative large sieve.
\begin{lemma}[{\cite[Thm.~7.13]{IwaniecKowalski}}]
\label{lem:large-sieve}
Let $Q,N\geq1$, and let $(a_m)$ be a finite sequence of complex numbers
supported on the integers $1\leq m\leq N$.  Then
\begin{equation}\label{eq:large-sieve}
 \sum_{q\leq Q}\frac{q}{\varphi(q)}
 \sum_{\chi\bmod q}^{*}
 \left|\sum_{m\leq N}a_m\chi(m)\right|^2
 \leq
 (Q^2+N-1)\sum_{m\leq N}|a_m|^2,
\end{equation}
where the $*$ indicates that the summation is over primitive characters only.
\end{lemma}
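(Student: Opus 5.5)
The plan is to deduce the multiplicative large sieve from the additive large sieve, using Gauss sums to pass from characters to additive characters. Write $S(\alpha)=\sum_{m\leq N}a_m e(m\alpha)$. The input I will assume is the sharp additive large sieve of Selberg and Montgomery--Vaughan. It says that if $\alpha_1,\dots,\alpha_R\in\mathbb R/\Z$ satisfy $\|\alpha_r-\alpha_s\|\geq\delta$ for $r\neq s$, then
\[
 \sum_{r}|S(\alpha_r)|^2\leq(\delta^{-1}+N-1)\sum_{m\leq N}|a_m|^2 .
\]
Proving this sharp form, with constant exactly $N-1+\delta^{-1}$, is the only genuinely hard analytic step. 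I would take it as known (via the Beurling--Selberg majorant, or the Montgomery--Vaughan generalized Hilbert inequality) rather than reprove it.

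\textbf{Step 1: the Gauss-sum identity.} Fix $q$ and a primitive character $\chi$ modulo $q$, and write $\tau(\bar\chi)$ for its Gauss sum. Primitivity gives, for \emph{every} integer $m$ (including those with $(m,q)>1$),
\[
 \chi(m)\tau(\bar\chi)=\sum_{a\bmod q}\bar\chi(a)e(am/q),
\]
together with $|\tau(\bar\chi)|^2=q$. Multiplying by $a_m$ and summing over $m$ yields
\[
 q\Bigl|\sum_{m\leq N}a_m\chi(m)\Bigr|^2=\Bigl|\sum_{(a,q)=1}\bar\chi(a)S(a/q)\Bigr|^2 .
\]

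\textbf{Step 2: orthogonality.} Sum the identity over primitive $\chi$ modulo $q$. As an upper bound, extend the sum on the right to all characters modulo $q$, since every term is nonnegative. Orthogonality of characters then gives
\[
 \sum_{\chi\bmod q}\Bigl|\sum_{(a,q)=1}\bar\chi(a)S(a/q)\Bigr|^2=\varphi(q)\sum_{(a,q)=1}|S(a/q)|^2 .
\]
Dividing by $\varphi(q)$, we obtain
\[
 \frac{q}{\varphi(q)}\sum^{*}_{\chi\bmod q}\Bigl|\sum_{m\leq N}a_m\chi(m)\Bigr|^2\leq\sum_{\substack{a\bmod q\\(a,q)=1}}|S(a/q)|^2 .
\]

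\textbf{Step 3: summing over $q$.} Sum the last inequality over $q\leq Q$. The points $a/q$ with $(a,q)=1$, $q\leq Q$, taken modulo $1$, are pairwise distinct; the case $q=1$ contributes the single point $0$. For two distinct such points,
\[
 \Bigl\|\frac aq-\frac{a'}{q'}\Bigr\|\geq\frac1{qq'}\geq Q^{-2} .
\]
So the additive large sieve applies with $\delta=Q^{-2}$, and the right-hand side is at most $(Q^2+N-1)\sum_{m\leq N}|a_m|^2$, which is exactly \eqref{eq:large-sieve}.

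Once the additive inequality is granted, the remaining care is confined to two points. The first is that primitivity is used precisely so that the Gauss-sum identity holds for all $m$, not only those coprime to $q$. The second is the spacing check, which must be done modulo $1$ so that $q=1$ is included.
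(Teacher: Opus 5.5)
Your argument is correct. The paper gives no proof of this lemma and simply quotes Iwaniec--Kowalski, Theorem~7.13, whose proof is exactly your reduction: the Gauss-sum identity (valid for all $m$ because $\chi$ is primitive), orthogonality over all characters modulo $q$, and the sharp additive large sieve (taken as known, with constant $\delta^{-1}+N-1$) applied at the Farey points $a/q$, $q\le Q$, which are $Q^{-2}$-spaced modulo $1$.
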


\section{Prime-order characters}

Recall that $G(n)$ is the least positive integer $G$ such that the
classes of all $a\leq G$ with $(a,n)=1$ generate $ \Gamma_n\coloneq (\Z/n\Z)^\times.$
\begin{lemma}\label{lem:prime-order}
Suppose that $\Gamma_n$ is nontrivial.  There exists a nonprincipal
Dirichlet character $\chi_n\bmod n$ of prime order such that
\[
        B(\chi_n)=G(n).
\]
\end{lemma}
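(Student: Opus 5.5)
Write $G=G(n)$ and let $H\le \Gamma_n$ be the subgroup generated by the classes of the integers $a\le G-1$ with $(a,n)=1$. By minimality of $G(n)$, $H$ is a proper subgroup of $\Gamma_n$. (If $G(n)=1$, then $H$ is trivial, which is proper because $\Gamma_n$ is nontrivial.) On the other hand, adjoining the class of $G$ itself yields all of $\Gamma_n$. In particular $(G,n)=1$ and $G\bmod n\notin H$.

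The plan is to find a character of prime order that is trivial on $H$ but not at $G$. Since $\Gamma_n/H$ is a nontrivial finite abelian group, I would pass to a quotient of prime order containing the image of $G$ nontrivially. Concretely, the image $\bar g$ of $G$ generates $\Gamma_n/H$, because $H$ together with $G$ generates $\Gamma_n$. Hence $\Gamma_n/H$ is cyclic of some order $m>1$. Choose a prime $\ell\mid m$ and let $K\supseteq H$ be the unique subgroup of index $\ell$ in $\Gamma_n$ containing $H$; it corresponds to the subgroup of index $\ell$ in the cyclic group $\Gamma_n/H$. Then $\bar g\notin K/H$, since $\bar g$ generates the whole cyclic quotient.

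Now take a faithful character of the cyclic group $\Gamma_n/K\cong \Z/\ell\Z$ and compose it with the projection $\Gamma_n\to\Gamma_n/K$. Extending by $0$ on integers not coprime to $n$ gives a Dirichlet character $\chi_n$ modulo $n$ of exact order $\ell$, which is prime. It is nonprincipal because $K\neq\Gamma_n$.

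It remains to verify $B(\chi_n)=G$:
\begin{itemize}
\item For $a<G$, either $(a,n)>1$, so that $\chi_n(a)=0$, or $a\bmod n\in H\subseteq K$, so that $\chi_n(a)=1$. Thus $\chi_n(a)\in\{0,1\}$.
\item At $a=G$ we have $(G,n)=1$ and $G\notin K$, so $\chi_n(G)$ is a nontrivial $\ell$-th root of unity, which is not in $\{0,1\}$.
\end{itemize}
Hence the minimum in the definition of $B$ is exactly $G$.

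There is no real obstacle; the one point requiring care is the cyclicity of $\Gamma_n/H$, which is what makes the subgroup $K$ exist with $G\notin K$. This follows from the minimality of $G(n)$, since a single additional element, the class of $G$, suffices to generate the quotient.
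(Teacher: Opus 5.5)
Your proof is correct and matches the paper's argument: both use the minimality of $G(n)$ to see that $\Gamma_n/H$ is a nontrivial cyclic group generated by the class of $G$ (which forces $(G,n)=1$), and then pull back a character of prime order. The only cosmetic difference is that the paper defines the character directly on $\Gamma_n/H$ by sending the generator to $e^{2\pi i/p}$; its kernel is exactly your index-$\ell$ subgroup $K/H$, so the two constructions coincide.
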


\begin{proof}
Put $G=G(n)$ and let $H$ be the subgroup of $\Gamma_n$ generated by
the residue classes of all positive integers $a<G$ with $(a,n)=1$.
By the minimality of $G$, one has
\[
        H\neq\Gamma_n,
        \qquad
        \Gamma_n=\langle H,\overline G\rangle,
\]
where we write $\overline a$ for the residue class of $a$ in
$(\mathbb Z/n\mathbb Z)^\times$.
Notice also that $(G,n)=1$, otherwise the set of integers $\leq G$
which are coprime to $n$ would be the same as the corresponding set
with $G$ replaced by $G-1$, contradicting the minimality of $G$.
It follows that the quotient $\Gamma_n/H$ is a nontrivial cyclic
group, generated by $\overline G H$.  Write
\[
        |\Gamma_n/H|=m>1
\]
and choose a prime $p$ dividing $m$.  If $g\coloneq \overline G H$ denotes the
generator of this cyclic quotient, define
\[
        \widetilde\chi(g)\coloneq e^{2\pi i/p}.
\]
This is well defined because
$p$ divides $m$, and it defines a character of
$\Gamma_n/H$ of exact order $p$.  Let
\[
\pi:\Gamma_n\to \Gamma_n/H
\]
be the quotient map, and define
\[
\chi_n\coloneq \widetilde\chi\circ\pi.
\]
Then $\chi_n$ is a character of $\Gamma_n$ of order $p$,
hence a Dirichlet character modulo $n$ of order $p$.
If $1\leq a<G$, then either $(a,n)>1$, in which case $\chi_n(a)=0$,
or $(a,n)=1$, in which case $\overline a\in H$ and $\chi_n(a)=1$.
On the other hand,
\[
        \chi_n(G)=e^{2\pi i/p}\notin\{0,1\}.
\]
Therefore $B(\chi_n)=G$.
\end{proof}

\begin{lemma}
\label{lem:order-inducer}
Let $\chi\bmod n$ be induced by the primitive character $\psi\bmod d$,
where $d$ divides $n$.  Then
\[
        \ord(\chi)=\ord(\psi).
\]
\end{lemma}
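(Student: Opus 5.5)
The plan is to compare the two characters through the natural surjection $\pi:\Gamma_n\to\Gamma_d=(\Z/d\Z)^\times$, reduction modulo $d$. By definition of induction, $\chi(a)=\psi(a)\chi_{0,n}(a)$ for all integers $a$. Equivalently, as characters of $\Gamma_n$, we have $\chi=\psi\circ\pi$, where $\psi$ is regarded as a character of $\Gamma_d$. So it suffices to show that, for every integer $k\ge1$, $\chi^k$ is principal modulo $n$ if and only if $\psi^k$ is principal modulo $d$. The orders, being the least such $k$, then coincide.

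Both directions come from the identity $\chi^k=\psi^k\circ\pi$ together with the fact that $\pi$ is surjective.

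\emph{From $\psi$ to $\chi$.} If $\psi^k$ is trivial on $\Gamma_d$, then clearly $\chi^k=\psi^k\circ\pi$ is trivial on $\Gamma_n$.

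\emph{From $\chi$ to $\psi$.} Suppose $\chi^k$ is trivial on $\Gamma_n$, and let $b\in\Gamma_d$ be arbitrary. Surjectivity of $\pi$ gives some $a\in\Gamma_n$ with $a\equiv b\pmod d$. Then $\psi^k(b)=\chi^k(a)=1$, so $\psi^k$ is trivial.

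The only ingredient needing justification is the surjectivity of $\Gamma_n\to\Gamma_d$ for $d\mid n$. This is standard and can be proved directly. Given $b$ with $(b,d)=1$, let $n'$ be the product of the prime powers exactly dividing $n$ whose primes do not divide $d$. By the Chinese remainder theorem choose $a\equiv b\pmod d$ and $a\equiv1\pmod{n'}$. Every prime dividing $n$ divides either $d$ or $n'$, so $a$ is coprime to $n$. I expect no real obstacle here; primitivity of $\psi$ is not even needed, only that $\chi$ is induced from $\psi$.
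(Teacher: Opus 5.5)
Your proof is correct and follows essentially the same route as the paper. Both arguments use the Chinese remainder theorem to show that every reduced class modulo $d$ lifts to a reduced class modulo $n$, i.e.\ that $\Gamma_n\to\Gamma_d$ is surjective. The paper then compares the image subgroups of $\chi$ and $\psi$ in $\C^\times$, whereas you compare triviality of $\chi^k$ and $\psi^k$; these amount to the same thing.
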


\begin{proof}
The values of $\chi$ on integers coprime to $n$ are values of $\psi$,
so $\ord(\chi)$ divides $\ord(\psi)$.
Conversely, let $b$ be any residue class modulo $d$ with $(b,d)=1$.
We claim that there exists $a\equiv b\pmod d$ with $(a,n)=1$.
Indeed, for each prime $\ell\mid n$ with $\ell\nmid d$, impose in
addition the congruence $a\equiv1\pmod\ell$.  These congruences are
compatible with $a\equiv b\pmod d$ and the Chinese remainder theorem
produces such an $a$.  For primes dividing $d$, coprimality follows
from $(b,d)=1$.
Hence every nonzero value of $\psi$ occurs among the nonzero values
of $\chi$.  Their images as finite subgroups of $\C^\times$ are
therefore equal, and the two characters have the same order.
\end{proof}
Let $\psi\bmod d$ be the primitive character inducing $\chi_n$.
By Lemma~\ref{lem:order-inducer},
\[
    \ord(\psi)=\ord(\chi_n)=p,
\]
where $p$ is prime. Hence $\psi^p$ is principal and $p$ is
squarefree. Therefore Norton's factor $R_p(d)$ equals $1$, and
Lemma~\ref{norton} gives, for every fixed integer $r\ge1$,
\[
    \left|\sum_{M<m\le M+H}\psi(m)\right|
    \ll_{\varepsilon,r}
    H^{1-1/r}d^{(r+1)/(4r^2)+\varepsilon}.
\]
In particular, no cubefree hypothesis on $d$ is required.

\section{A high-moment exceptional-set estimate}\label{sec:highmoment}

Fix an integer $r\geq2$ and set
\[
        a_r\coloneq 1-\frac1r,
        \qquad
        b_r\coloneq \frac{r+1}{4r^2}.
\]
For $N\geq1$, define
\begin{equation}\label{eq:Fr}
        F_r(N)\coloneq \sum_{g\mid N}g^{-a_r}
        =\sum_{g\mid N}g^{-1+1/r}.
\end{equation}

\begin{lemma}\label{lem:moments}
For every fixed pair of integers $r\geq2$ and $k\geq1$,
\[
        \sum_{N\leq y}F_r(N)^k\ll_{r,k} y
        \qquad (y\geq1).
\]
\end{lemma}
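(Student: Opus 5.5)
The plan is to expand the $k$-th power and reduce the problem to a convergent multiple sum. Writing $\sigma=a_r=1-1/r<1$, we have
\[
 F_r(N)^k=\sum_{g_1,\dots,g_k\mid N}(g_1\cdots g_k)^{-\sigma}.
\]
The condition $g_1,\dots,g_k\mid N$ is equivalent to $L:=\mathrm{lcm}(g_1,\dots,g_k)\mid N$. Interchanging summations therefore gives
\[
 \sum_{N\le y}F_r(N)^k\le y\sum_{g_1,\dots,g_k\ge1}\frac{(g_1\cdots g_k)^{-\sigma}}{\mathrm{lcm}(g_1,\dots,g_k)}.
\]
It suffices to show that this series converges, with a value depending only on $r$ and $k$.

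The function $f(g_1,\dots,g_k)=(g_1\cdots g_k)^{-\sigma}/\mathrm{lcm}(g_1,\dots,g_k)$ is multiplicative in the $k$-tuple. The series therefore factors as an Euler product
\[
 \prod_p\Bigl(\sum_{e_1,\dots,e_k\ge0}p^{-\sigma(e_1+\cdots+e_k)-\max_i e_i}\Bigr).
\]
The key step is to bound the local factor as $1+O_{r,k}(p^{-1-\sigma})$. In a nontrivial tuple let $m=\max e_i\ge1$. Then the exponent satisfies
\[
 \sigma\textstyle\sum e_i+m\ge m(1+\sigma)\ge 1+\sigma.
\]
Moreover, the number of tuples with a given maximum $m$ is at most $(m+1)^k$. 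The nontrivial part of the local factor is thus at most $\sum_{m\ge1}(m+1)^kp^{-m(1+\sigma)}$. Since $p^{1+\sigma}\ge 2^{3/2}$, this is $\ll_k p^{-1-\sigma}$ uniformly in $p$, because the tail is a geometric-type series with bounded polynomial weights.

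Since $1+\sigma=2-1/r>1$ for $r\ge2$, we get $\sum_p p^{-1-\sigma}<\infty$. The Euler product therefore converges to a constant $C(r,k)$, and this gives $\sum_{N\le y}F_r(N)^k\le C(r,k)\,y$, as claimed.

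The only point needing care is the local-factor bound. The crude estimate $\sum e_i\ge m$ is enough here, but it must be combined with the $\max$ term coming from the lcm. Bounding $1/\mathrm{lcm}$ by $1/\max g_i$ would not suffice, because multiplicativity is needed to get the uniform Euler product estimate.
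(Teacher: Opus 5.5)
Your proposal is correct and follows the paper's proof essentially step for step. You expand the $k$-th power, bound the number of $N\le y$ divisible by the lcm by $y/[g_1,\dots,g_k]$, and prove convergence of the Euler product via the local bound $\ll_{r,k}p^{-(2-1/r)}$, obtained from $\sum e_i\ge\max e_i$ and the count of at most $(E+1)^k$ tuples with maximum $E$.
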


\begin{proof}
Expanding the $k$-th power and interchanging summations gives
\begin{align*}
 \sum_{N\leq y}F_r(N)^k
 &=
 \sum_{g_1,\ldots,g_k\geq1}
 (g_1\cdots g_k)^{-a_r}
 \#\{N\leq y:[g_1,\ldots,g_k]\mid N\}\leq
 y
 \sum_{g_1,\ldots,g_k\geq1}
 \frac{(g_1\cdots g_k)^{-a_r}}
      {[g_1,\ldots,g_k]}.
\end{align*}
The multiple series is multiplicative.  Its Euler factor at a prime
$q$ is
\[
 \mathcal E_q
 =
 \sum_{e_1,\ldots,e_k\geq0}
 q^{-a_r(e_1+\cdots+e_k)-\max(e_1,\ldots,e_k)}.
\]
For a nonzero $k$-tuple put $E=\max(e_1,\ldots,e_k)\geq1$.  Since
$e_1+\cdots+e_k\geq E$,
\[
 a_r(e_1+\cdots+e_k)+E
 \geq (a_r+1)E
 =\left(2-\frac1r\right)E.
\]
There are at most $(E+1)^k$ tuples with maximum $E$.  Hence
\[
 \mathcal E_q-1
 \ll_{r,k}
 \sum_{E\geq1}(E+1)^k
 q^{-(2-1/r)E}
 \ll_{r,k}q^{-(2-1/r)}.
\]
Since $2-1/r>1$, the Euler product
$\prod_q\mathcal E_q$ converges absolutely.  The asserted estimate
follows.
\end{proof}

The next proposition is a flexible version of the exceptional-set estimate
used by Burthe.  The point is that the truncation parameter $H$ is left free for now.
We will later choose it as a function of the conductor.
\begin{proposition}\label{prop:flexible-exceptional}
There is an absolute constant $C_0>0$ with the following property.  Fix
integers $r\geq2$ and $k\geq1$, and let $\psi$ be a primitive character of
prime order and conductor $d$.  Let $x\geq d$, and let $H$ satisfy
\begin{equation}\label{eq:H-range}
        C_0\log x\leq H\leq x.
\end{equation}
Then, for every $\eta>0$,
\begin{equation}\label{eq:flex-bad-count}
 \#\left\{
 n\leq x:\ d\mid n,\ B(\psi\chi_{0,n})>H
 \right\}
 \ll_{r,k,\eta}
 \frac{x}{d}
 (\log H)^k H^{-k/r}d^{kb_r+\eta}.
\end{equation}
Consequently,
\begin{equation}\label{eq:flex-fixedpsi-sum}
 \sum_{\substack{n\leq x\\d\mid n}}
 B(\psi\chi_{0,n})
 \ll_{r,k,\eta}
 \frac{xH}{d}
 +x(\log x)^2(\log H)^kH^{-k/r}d^{kb_r+\eta}.
\end{equation}
\end{proposition}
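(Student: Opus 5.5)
Write $n=dN$ with $N\le x/d$. The condition $B(\psi\chi_{0,n})>H$ says that $\psi(m)\in\{0,1\}$ for every $m\le H$ coprime to $n$. Equivalently, $\psi(m)=1$ for all $m\le H$ with $(m,n)=1$; the values at $m$ with $(m,d)>1$ are already $0$.

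\textbf{Step 1: a large character sum over a sifted set.} Fix such an $n$. Since $\psi$ vanishes on integers not coprime to $d$, the value $\psi(m)$ at $m$ coprime to $d$ depends only on whether $(m,N)=1$. Möbius inversion over divisors $g$ of $N$ (we may restrict to $(g,d)=1$) gives
\[
\sum_{\substack{m\le H\\(m,n)=1}}\psi(m)=\sum_{g\mid N}\mu(g)\psi(g)\sum_{\ell\le H/g}\psi(\ell).
\]
The left side equals $\#\{m\le H:(m,n)=1\}$, which is $\gg H\varphi(n)/n$ once $H\ge C_0\log x$. To see this, apply an elementary sieve to the small primes of $n$ (those below $H$) and bound the large primes trivially, using that $n\le x$ has at most $O(\log x)$ prime factors. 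Then $\varphi(n)/n\gg 1/\log\log x$, and this is $\gg 1/\log H$ up to adjusting. On the right, bound each inner sum by Norton's estimate \eqref{eq:Norton-prime} with $M=0$: it is $\ll (H/g)^{a_r}d^{b_r+\eps}$. When $H/g<1$ the sum is empty, and the trivial bound is at most this anyway. Hence
\[
H/\log H\ll H^{a_r}d^{b_r+\eps}F_r(N),
\qquad\text{i.e.}\qquad
1\ll (\log H)H^{-1/r}d^{b_r+\eps}F_r(N).
\]

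\textbf{Step 2: counting via moments.} Raise this inequality to the $k$-th power. Summing over $N\le x/d$ and using Lemma~\ref{lem:moments} with $y=x/d$ gives
\[
\#\{\text{bad } n\}\le\sum_{N\le x/d}\bigl((\log H)H^{-1/r}d^{b_r+\eps}F_r(N)\bigr)^k\ll\frac{x}{d}(\log H)^kH^{-k/r}d^{kb_r+k\eps}.
\]
Taking $\eps=\eta/k$ gives \eqref{eq:flex-bad-count}.

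\textbf{Step 3: the summed bound.} Split $\sum_{n\le x,\,d\mid n}B(\psi\chi_{0,n})$ according to whether $B\le H$ or $B>H$. The first part contributes at most $H\cdot x/d$. For the second, use Lemma~\ref{lem:BurtheLift}, which gives $B\ll d(\log x)^2$, and multiply by the count from \eqref{eq:flex-bad-count}. The factor $d$ cancels $1/d$, leaving $x(\log x)^2(\log H)^kH^{-k/r}d^{kb_r+\eta}$. This is \eqref{eq:flex-fixedpsi-sum}.

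\textbf{Main obstacle.} The delicate step is the lower bound in Step 1, $\#\{m\le H:(m,n)=1\}\gg H/\log H$, uniformly in $n\le x$ with only $H\ge C_0\log x$. If the small prime factors of $n$ cause too much sieving loss, I would instead sum only over primes $m\le H$ not dividing $n$. There are $\gg H/\log H$ of these, because $n$ has at most $\log x/\log 2\le H/(2\log H)$ prime factors once $C_0$ is large. For each such prime, $\psi(m)=1$. This replaces the problem by a prime sum, so the Möbius step would have to be redone for primes, for example by applying Norton to $\psi$ restricted via a Vaughan-type or trivial decomposition. I would try the coprime-integer version first, since $\varphi(n)/n\gg1/\log\log n$ combined with a Brun-type sieve should suffice, at the cost of only a $\log H$ factor per moment. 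That loss is exactly the $(\log H)^k$ in the statement.
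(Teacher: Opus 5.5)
Your overall architecture is exactly the paper's: Möbius inversion of $S(N)=\sum_{m\le H,(m,N)=1}\psi(m)$ over $g\mid N$, Norton's bound giving $|S(N)|\ll H^{1-1/r}d^{b_r+\eps}F_r(N)$, the $k$-th moment of $F_r$ via Lemma~\ref{lem:moments}, and the final split using Lemma~\ref{lem:BurtheLift}.

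The gap is the lower bound $S(N)\gg H/\log H$. Your main line derives it from $\#\{m\le H:(m,n)=1\}\gg H\varphi(n)/n$, and that inequality is false uniformly in $n\le x$ when $H\asymp C_0\log x$. To see this, take $n=\prod_{Z\le p\le H/(2C_0)}p$, with $Z>2C_0$ a large constant, and set $x=n$. Chebyshev's bound gives $C_0\log x\le H\le x$, so this $n$ is admissible.
\begin{itemize}
\item On one side, $\varphi(n)/n\asymp(\log Z)/\log H$.
\item On the other, every $m\le H$ coprime to $n$ is either $Z$-smooth, of which there are $O_Z((\log H)^{\pi(Z)})$, or of the form $ps$ with $p>H/(2C_0)$ and $s<2C_0$.
\item Hence $\#\{m\le H:(m,n)=1\}\ll H\log(2C_0)/\log H$.
\end{itemize}
The ratio to $H\varphi(n)/n$ is $\ll\log(2C_0)/\log Z$, which tends to $0$ as $Z\to\infty$, so no fixed implied constant survives. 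Your sieve sketch cannot repair this. Every prime that affects $m\le H$ lies below $H$, so there are no ``large primes'' to discard trivially. Moreover, lower-bound sieves give nothing when the sifting level is comparable to the interval length $H$.

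The fix is your fallback, which is precisely the paper's argument, but two things need correcting.

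First, nothing has to be ``redone for primes''. When $B(\psi\chi_{0,n})>H$, $S(N)=\#\{m\le H:(m,n)=1\}$ is a sum of nonnegative terms. You may therefore discard every $m$ except the primes in $(H/2,H]$ not dividing $n$. The Möbius--Norton upper bound is then applied to the same quantity $S(N)$, so no prime sums of $\psi$ and no Vaughan-type decomposition are needed.

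Second, your count $\log x/\log 2\le H/(2\log H)$ is false for $H\asymp\log x$, since it would require $2\log H\le C_0\log 2$. Count instead only the $R$ prime divisors of $n$ lying in $(H/2,H]$. From $(H/2)^R\le n\le x$ you get $R\le\log x/\log(H/2)\le 2H/(C_0\log H)$. This is at most half of $\pi(H)-\pi(H/2)\gg H/\log H$ once $C_0$ is large, which gives $S(N)\gg H/\log H$. With that correction, the rest of your argument coincides with the paper's proof.
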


\begin{proof}
Write $n=dN$.  Suppose first that
\[
        B(\psi\chi_{0,n})>H.
\]
If $m\leq H$ and $(m,n)=1$, then
\[
        \psi(m)=\psi(m)\chi_{0,n}(m)=1.
\]
Consider
\[
        S(N)\coloneq \sum_{\substack{m\leq H\\(m,N)=1}}\psi(m).
\]
If $(m,N)=1$ but $(m,d)>1$, then $\psi(m)=0$.  If $(m,N)=1$ and
$(m,d)=1$, then $(m,n)=1$ and $\psi(m)=1$.  Therefore
\begin{equation}\label{eq:S-count}
        S(N)=\#\{m\leq H:(m,n)=1\}.
\end{equation}
In particular, using only primes in $(H/2,H]$,
\begin{equation}\label{eq:S-primes}
 S(N)\geq
 \pi(H)-\pi(H/2)
 -\#\{p\mid n:H/2<p\leq H\}.
\end{equation}
For all sufficiently large $H$, the prime number theorem gives an absolute
constant $c>0$ such that
\begin{equation}\label{eq:short-prime-lower}
        \pi(H)-\pi(H/2)\geq c\frac{H}{\log H}.
\end{equation}
If $R$ denotes the number of prime divisors of $n$ in $(H/2,H]$, then
\[
        (H/2)^R\leq n\leq x,
\]
and hence
\begin{equation}\label{eq:large-prime-divisors}
        R\leq\frac{\log x}{\log(H/2)}.
\end{equation}
For $x$ sufficiently large, \eqref{eq:H-range} implies
$\log(H/2)\geq\tfrac12\log H$.  Choosing $C_0$ sufficiently large in terms
of the absolute constant $c$ in \eqref{eq:short-prime-lower}, we obtain from
\eqref{eq:S-primes}, \eqref{eq:short-prime-lower}, and \eqref{eq:large-prime-divisors} that
\begin{equation}\label{eq:S-lower-flex}
        S(N)\gg\frac{H}{\log H}.
\end{equation}
On the other hand, M\"obius inversion gives
\begin{align}
 S(N)
 &=
 \sum_{m\leq H}\psi(m)
 \sum_{\substack{g\mid m\\g\mid N}}\mu(g)=
 \sum_{g\mid N}\mu(g)\psi(g)
 \sum_{h\leq H/g}\psi(h).
 \label{eq:mobius-flex}
\end{align}
Terms with $H/g<1$ are empty.  By Lemma~\ref{norton}, for every
$\eps_1>0$,
\[
 \left|\sum_{h\leq H/g}\psi(h)\right|
 \ll_{\eps_1,r}
 (H/g)^{1-1/r}d^{b_r+\eps_1}.
\]
Hence
\begin{equation}\label{eq:S-upper-flex}
 |S(N)|
 \ll_{\eps_1,r}
 H^{1-1/r}d^{b_r+\eps_1}F_r(N).
\end{equation}
Combining \eqref{eq:S-lower-flex} and \eqref{eq:S-upper-flex}, we obtain
\begin{equation}\label{eq:Fr-lower-flex}
 F_r(N)
 \gg_{\eps_1,r}
 \frac{H^{1/r}d^{-b_r-\eps_1}}{\log H}.
\end{equation}
Let $\mathcal E$ be the set of $N\leq x/d$ arising from integers $n=dN$
for which $B(\psi\chi_{0,n})>H$.  Raising
\eqref{eq:Fr-lower-flex} to the $k$-th power and applying
Lemma~\ref{lem:moments}, we get
\[
 \#\mathcal E
 \ll_{r,k,\eps_1}
 \frac{x}{d}(\log H)^kH^{-k/r}d^{kb_r+k\eps_1}.
\]
Taking $\eps_1=\eta/k$ proves \eqref{eq:flex-bad-count}.\\
To prove \eqref{eq:flex-fixedpsi-sum}, split the multiples $n\leq x$ of $d$
according as $B(\psi\chi_{0,n})\leq H$ or $B(\psi\chi_{0,n})>H$.  The first
part contributes at most
\[
        \frac{xH}{d}.
\]
For the exceptional part, Lemma~\ref{lem:BurtheLift} and
\eqref{eq:flex-bad-count} give
\begin{align*}
 &\sum_{\substack{n\leq x,\ d\mid n\\
                   B(\psi\chi_{0,n})>H}}
 B(\psi\chi_{0,n})\ll
 d(\log x)^2
 \cdot
 \frac{x}{d}(\log H)^kH^{-k/r}d^{kb_r+\eta},
\end{align*}
which is the second term in \eqref{eq:flex-fixedpsi-sum}.
\end{proof}
We will use the following elementary optimization.
\begin{lemma}[Optimization]\label{lem:optimization}
For fixed integers $r\geq2$ and $k\geq1$, define
\begin{equation}\label{eq:theta-opt}
 \theta_{r,k}
 \coloneq \frac{4r^2+k(r+1)}{4r(r+k)}
\end{equation}
and
\begin{equation}\label{eq:lambda-opt}
 \lambda_{r,k}
 \coloneq \frac{k(3r-1)}{4r(r+k)}.
\end{equation}
Then
\begin{equation}\label{eq:balance-identity}
 1-\theta_{r,k}
 =k\left(\frac{\theta_{r,k}}{r}-b_r\right)
 =\lambda_{r,k}.
\end{equation}
Moreover,
\[
        \sup_{r\geq2,\ k\geq1}\lambda_{r,k}=\frac34.
\]
\end{lemma}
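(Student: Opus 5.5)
The lemma has two parts: an algebraic identity, and a supremum. For the identity I would clear denominators using the common denominator $4r(r+k)$. First, $1-\theta_{r,k}=\frac{4r(r+k)-4r^2-k(r+1)}{4r(r+k)}=\frac{4rk-kr-k}{4r(r+k)}=\frac{k(3r-1)}{4r(r+k)}=\lambda_{r,k}$. Second, I would compute
\[
\frac{\theta_{r,k}}{r}-b_r=\frac{4r^2+k(r+1)}{4r^2(r+k)}-\frac{(r+1)(r+k)}{4r^2(r+k)}=\frac{4r^2-r^2-r}{4r^2(r+k)}=\frac{3r-1}{4r(r+k)}.
\]
Here the $k(r+1)$ terms cancel against $(r+1)k$, and $4r^2-(r+1)r=r(3r-1)$. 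Multiplying by $k$ gives $\lambda_{r,k}$, which proves \eqref{eq:balance-identity}. This is also the motivation for the definition: $\theta_{r,k}$ is the unique solution of the linear equation $1-\theta=k(\theta/r-b_r)$, i.e. $\theta=\frac{1+kb_r}{1+k/r}$. That equation balances the two terms of \eqref{eq:flex-fixedpsi-sum} when $H=d^\theta$.

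For the supremum, I would write
\[
\lambda_{r,k}=\frac{3r-1}{4r}\cdot\frac{k}{r+k}.
\]
Both factors are positive. The first is $\frac34-\frac1{4r}<\frac34$, and the second is $<1$. Hence $\lambda_{r,k}<3/4$ for all admissible $r,k$.

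For the matching lower bound, I would fix $r$ and let $k\to\infty$, so that $\lambda_{r,k}\to\frac{3r-1}{4r}$. Then letting $r\to\infty$ makes this tend to $3/4$. Concretely, taking $k=r^2$ gives $\lambda_{r,r^2}=\frac{(3r-1)r}{4(r^2+r)}\cdot\frac{1}{1}\cdot\frac{r}{r}$, which simplifies to $\frac{(3r-1)r}{4r(r+1)}=\frac{3r-1}{4(r+1)}\to\frac34$. So the supremum equals $3/4$ and is not attained.

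There is no real obstacle. The only care needed is the arithmetic in the cancellation for $\theta_{r,k}/r-b_r$.
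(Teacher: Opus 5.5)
Your proof is correct and matches the paper's: you verify the identity by direct algebra (the paper phrases this as solving the linear balance equation for $\theta$, which you also note), and you obtain the supremum by letting $k\to\infty$ and then $r\to\infty$. Your factorization $\lambda_{r,k}=\frac{3r-1}{4r}\cdot\frac{k}{r+k}$ also makes explicit the strict upper bound $\lambda_{r,k}<3/4$, which the paper leaves implicit, and your $k=r^2$ example, though written clumsily, correctly gives $\lambda_{r,r^2}=\frac{3r-1}{4(r+1)}\to\frac34$.
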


\begin{proof}
Solving
\[
 1-\theta
 =k\left(\frac{\theta}{r}-\frac{r+1}{4r^2}\right)
\]
for $\theta$ gives \eqref{eq:theta-opt}, and subtracting from $1$ gives
\eqref{eq:lambda-opt}.  For fixed $r$,
\[
 \lim_{k\to\infty}\lambda_{r,k}
 =\frac{3r-1}{4r}
 =\frac34-\frac1{4r}.
\]
Letting $r\to\infty$ proves the assertion.
\end{proof}

\section{Large prime sums and the multiplicative large sieve}
\label{sec:large-sieve}
For $Y\geq 3$, write
\[
    \mathcal P(Y)\coloneq \{p\text{ prime}:Y/2<p\leq Y\},
    \qquad
    P(Y)\coloneq \#\mathcal P(Y).
\]

\begin{lemma}\label{lem:large-G-prime-sum}
Fix $A>1$, put
\[
    L\coloneq \log x,
    \qquad
    Y\coloneq L^A,
\]
and suppose that $x$ is sufficiently large in terms of $A$.
Let $n\leq x$ satisfy $G(n)>Y$.  Let $\chi_n\bmod n$ be the
prime-order character supplied by Lemma~\ref{lem:prime-order}, and let
$\psi\bmod d$ be the primitive character inducing $\chi_n$.  Then:
\begin{enumerate}
\item $\ord(\psi)$ is prime;
\item there is an absolute constant $C_B>0$ such that
\begin{equation}\label{eq:Dx-new}
    d\geq D_x
    \coloneq \max\left\{1,C_B^{-1}L^{A-2}\right\}.
\end{equation}
In particular, if $A>2$, then for sufficiently large $x$, $ D_x=C_B^{-1}L^{A-2}$;
\item one has
\begin{equation}\label{eq:prime-sum-large}
    \left|
        \sum_{p\in\mathcal P(Y)}\psi(p)
    \right|
    \geq \frac12 P(Y).
\end{equation}
\end{enumerate}
\end{lemma}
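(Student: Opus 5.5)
Part (1) follows at once from the two preceding lemmas. Lemma~\ref{lem:prime-order} gives a character $\chi_n$ of prime order $p$ with $B(\chi_n)=G(n)>Y$. Lemma~\ref{lem:order-inducer} then gives $\ord(\psi)=\ord(\chi_n)=p$. Since $\chi_n=\psi\chi_{0,n}$, we also have $B(\psi\chi_{0,n})=G(n)$, and this identity is used in both remaining parts.

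For part (2), I would apply Lemma~\ref{lem:BurtheLift} to $\psi$ and $n$. This gives
\[
L^A=Y<G(n)=B(\psi\chi_{0,n})\le C_B\, d(\log n)^2\le C_B\, d L^2,
\]
where $C_B$ is the absolute implied constant of \eqref{eq:BurtheLift}. Hence $d\ge C_B^{-1}L^{A-2}$. Combined with the trivial bound $d\ge1$, this gives $d\ge D_x$. When $A>2$, we have $C_B^{-1}L^{A-2}\ge1$ once $x$ is large, so the maximum equals $C_B^{-1}L^{A-2}$.

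For part (3), the point is that every $m\le Y<G(n)$ with $(m,n)=1$ satisfies $\chi_n(m)=1$, and so $\psi(m)=1$. For a prime $q\in\mathcal P(Y)$ there are two cases:
\begin{itemize}
\item If $q\nmid n$, then $\psi(q)=1$.
\item If $q\mid n$, then $|\psi(q)|\le1$.
\end{itemize}
Writing $R$ for the number of primes of $\mathcal P(Y)$ dividing $n$, this gives
\[
\Bigl|\sum_{q\in\mathcal P(Y)}\psi(q)\Bigr|\ge (P(Y)-R)-R=P(Y)-2R.
\]
As in \eqref{eq:large-prime-divisors}, we have $(Y/2)^R\le n\le x$, so
\[
R\le \frac{L}{\log(Y/2)}\ll \frac{L}{A\log L}.
\]
On the other hand, the prime number theorem as in \eqref{eq:short-prime-lower} gives $P(Y)\gg Y/\log Y=L^A/(A\log L)$. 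Since $A>1$, for $x$ large in terms of $A$ we get $2R\le \tfrac12P(Y)$, which proves \eqref{eq:prime-sum-large}.

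The only delicate point is this last comparison: the number of prime divisors of $n$ in the dyadic interval must be negligible against $P(Y)$. This is exactly where the hypothesis $A>1$ is used, because it makes $L^A/\log L$ dominate $L/\log L$.
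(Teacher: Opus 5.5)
Your proposal is correct and follows essentially the same route as the paper. You get (1) from Lemmas~\ref{lem:prime-order} and~\ref{lem:order-inducer}, (2) from Burthe's lift bound $Y<B(\psi\chi_{0,n})\le C_B dL^2$, and (3) from the same count $(Y/2)^R\le x$ of prime divisors of $n$ in $\mathcal P(Y)$, compared against $P(Y)\gg Y/\log Y$ using $A>1$.
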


\begin{proof}
By Lemma~\ref{lem:order-inducer},
\[
    \ord(\psi)=\ord(\chi_n),
\]
and the latter is prime by Lemma~\ref{lem:prime-order}.  This proves
the first assertion.\\
Since $\psi\bmod d$ induces $\chi_n\bmod n$, we have
\[
    \chi_n=\psi\chi_{0,n}.
\]
Hence Lemma~\ref{lem:prime-order} gives
\[
    G(n)=B(\chi_n)=B(\psi\chi_{0,n}).
\]
Choose an absolute constant $C_B>0$ such that
Lemma~\ref{lem:BurtheLift} gives
\[
    B(\psi\chi_{0,n})
    \leq C_Bd(\log n)^2.
\]
Since $n\leq x$ and $G(n)>Y$, it follows that
\[
    Y
    <G(n)
    =B(\psi\chi_{0,n})
    \leq C_Bd(\log n)^2
    \leq C_BdL^2.
\]
Therefore
\[
    d>\frac{Y}{C_BL^2}
    =C_B^{-1}L^{A-2}.
\]
Since also $d\geq1$, we obtain
\[
    d\geq
    \max\left\{1,C_B^{-1}L^{A-2}\right\}
    =D_x,
\]
which proves \eqref{eq:Dx-new}.\\
It remains to prove \eqref{eq:prime-sum-large}.  Let
$p\in\mathcal P(Y)$ and suppose that $p\nmid n$.  Since
\[
    p\leq Y<G(n)=B(\chi_n),
\]
the definition of $B(\chi_n)$ gives
\[
    \chi_n(p)\in\{0,1\}.
\]
As $p\nmid n$, one has $\chi_n(p)\neq0$, and hence
\[
    \chi_n(p)=1.
\]
Moreover $\chi_{0,n}(p)=1$, so from
$\chi_n=\psi\chi_{0,n}$ we obtain
\[
    \psi(p)=1.
\]
Thus every prime $p\in\mathcal P(Y)$ which fails to contribute $1$ to
the sum must divide $n$.
Put
\[
    R_n
    \coloneq \#\{p\in\mathcal P(Y):p\mid n\}.
\]
Since every such prime satisfies $p>Y/2$, we have
\[
    \left(\frac{Y}{2}\right)^{R_n}
    \leq \prod_{\substack{p\mid n\\ p\in\mathcal P(Y)}}p
    \leq n
    \leq x.
\]
Consequently,
\begin{equation}\label{eq:Rn-bound}
    R_n
    \leq
    \frac{L}{\log(Y/2)}.
\end{equation}
For primes $p\nmid n$ in $\mathcal P(Y)$ we have $\psi(p)=1$, while
for the remaining primes $|\psi(p)|\leq1$.  Therefore
\[
\begin{aligned}
    \left|
        \sum_{p\in\mathcal P(Y)}\psi(p)-P(Y)
    \right|
    &=
    \left|
        \sum_{\substack{p\in\mathcal P(Y)\\p\mid n}}
        \bigl(\psi(p)-1\bigr)
    \right| \leq
    2R_n.
\end{aligned}
\]
It follows that
\begin{equation}\label{eq:prime-sum-preliminary}
    \left|
        \sum_{p\in\mathcal P(Y)}\psi(p)
    \right|
    \geq P(Y)-2R_n.
\end{equation}
By the prime number theorem,
\begin{equation}\label{eq:PY-asymp}
    P(Y)
    =\pi(Y)-\pi(Y/2)
    \asymp \frac{Y}{\log Y}.
\end{equation}
On the other hand, since $Y=L^A$,
\[
    \frac{R_n}{P(Y)}
    \ll
    \frac{L}{\log(Y/2)}
    \frac{\log Y}{Y}
    \ll_A
    L^{1-A}.
\]
Since $A>1$, this tends to zero as $x\to\infty$.  Thus, for
sufficiently large $x$,
\[
    2R_n\leq \frac12 P(Y).
\]
Combining this with \eqref{eq:prime-sum-preliminary} gives
\[
    \left|
        \sum_{p\in\mathcal P(Y)}\psi(p)
    \right|
    \geq \frac12P(Y),
\]
as required.
\end{proof}
The next lemma is the large-sieve estimate that replaces the zero-density
argument.
\begin{lemma}\label{lem:large-sieve-exceptional}
Fix $A>2$ and a real number $\rho>2/A$.  Put $L=\log x$ and $Y=L^A$.
Let $c>0$ be fixed.  For a real number $Q$ satisfying
\begin{equation}\label{eq:Q-range-LS}
        cL^{A-2}\leq Q\leq x,
\end{equation}
let $R(Q)$ denote the number of primitive Dirichlet characters $\psi$ of
conductor $d$ with $Q\leq d<2Q$ for which
\begin{equation}\label{eq:LS-exceptional-condition}
 \left|\sum_{p\in\mathcal P(Y)}\psi(p)\right|
 \geq\frac12P(Y).
\end{equation}
Then, for all sufficiently large $x$ in terms of $A,\rho,c$,
\begin{equation}\label{eq:RQ-bound}
        R(Q)\ll_{A,\rho,c}Q^\rho,
\end{equation}
uniformly for $Q$ in the range \eqref{eq:Q-range-LS}.
\end{lemma}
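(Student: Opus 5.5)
We will prove this by raising the prime sum to a high power and applying the multiplicative large sieve, Lemma~\ref{lem:large-sieve}. Fix a positive integer $j$, to be chosen large in terms of $A$ and $\rho$, and set
\[
    a_m\coloneq \#\{(p_1,\dots,p_j)\in\mathcal P(Y)^j:\ p_1\cdots p_j=m\}.
\]
Then
\[
    \Bigl(\sum_{p\in\mathcal P(Y)}\psi(p)\Bigr)^j=\sum_{m\leq Y^j}a_m\psi(m),
\]
and $a_m$ is supported on $m\le N\coloneq Y^j$. By unique factorization, $a_m\le j!$, and $\sum_m a_m=P(Y)^j$. Hence
\[
    \sum_m a_m^2\leq j!\,P(Y)^j.
\]

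Every character counted by $R(Q)$ satisfies \eqref{eq:LS-exceptional-condition}, so it contributes at least $(P(Y)/2)^{2j}$ to $\sum_\chi\bigl|\sum_m a_m\chi(m)\bigr|^2$. Since $q/\varphi(q)\ge1$, we may sum over the moduli $q\le 2Q$ in Lemma~\ref{lem:large-sieve} and drop the weights. This gives
\[
    R(Q)\,\Bigl(\frac{P(Y)}{2}\Bigr)^{2j}\leq \bigl(4Q^2+Y^j\bigr)\,j!\,P(Y)^j,
\]
that is,
\[
    R(Q)\ll_j \frac{Q^2+Y^j}{P(Y)^j}.
\]
The term $Y^j/P(Y)^j$ is $\ll(\log Y)^j\ll_{j,A}(\log L)^j$ by \eqref{eq:PY-asymp}. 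Since $Q\ge cL^{A-2}$ and $A>2$, this is $\ll Q^{\eps}$ for any $\eps>0$, which is acceptable.

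The main obstacle is the term $Q^2/P(Y)^j$. We need $Q^{2}P(Y)^{-j}\ll Q^\rho$, i.e.\ $P(Y)^j\gg Q^{2-\rho}$, and $Q$ can be as large as $x$. The crude bound $P(Y)^j\approx L^{Aj}$ cannot beat $x^{2-\rho}$ with $j$ fixed. Two remedies suggest themselves, and the second is the one to pursue:
\begin{itemize}
\item Let $j$ grow, $j\asymp \log Q/\log Y$. The combinatorial loss $j!$ must then be controlled, for instance by using the sharper bound $a_m\le j!$ only against $(\log Y)^{j}$ savings. This likely fails for $Q$ near $x$.
\item Choose $j$ so that $Y^j\approx Q^2$, namely $j=\lfloor 2\log Q/\log Y\rfloor$. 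Both terms are then of size $Q^2$, and we get $R(Q)\ll j!\,(\log Y)^j\,2^{2j}\,Q^2\,Y^{-j}$.
\end{itemize}
The issue is whether this is $\le Q^{\rho}$. Here the hypothesis $\rho>2/A$ should enter through the observation that, for $Q$ small, $Y^{j}$ can be compared with powers of $Q$ via $Q\ge cL^{A-2}$, i.e.\ $Y\ge (Q/c)^{A/(A-2)}$, while for large $Q$ a trivial bound is used.

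For the trivial bound, the number of primitive characters of prime order with conductor in $[Q,2Q)$ is at most $\sum_{d<2Q}\#\{\psi\bmod d\}\ll Q^{1+\eps}$. However, we only need characters relevant to $n\le x$, so it seems more likely that the intended argument restricts to $Q$ in a range where $Q\le Y^{C}$ for some fixed $C$. Concretely, I would aim to take $j$ with $Y^{j}\ge Q^{2-\rho}(\log Y)^{j}4^{j}j!$. This is possible with $j$ bounded precisely when $\log Q\ll\log Y$, and then $R(Q)\ll Q^{\rho}$. I expect the delicate bookkeeping to lie in making the exponent condition come out as exactly $\rho>2/A$. Heuristically this is the regime $Q\approx L^{A}$, $Y^{j}\approx Q^{2}$, with the count balancing as $Q^{2}/Y^{j/1}$ against the primitive-character total. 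I would check this case split first.
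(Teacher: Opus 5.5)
Your setup is the right one (the $j$-th power of the prime sum, $\sum_m a_m^2\le j!\,P(Y)^j$, and the large sieve with modulus bound $2Q$). However, the proposal stops before the step that actually proves the lemma, and it misjudges that step. The growing-moment choice you dismiss as ``likely fail[ing] for $Q$ near $x$'' is exactly what works.

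Take $j=\max\{1,\lceil 2\log(2Q)/\log Y\rceil\}$. Use a ceiling, so that $Y^j\ge(2Q)^2$; then the large-sieve factor $(2Q)^2+Y^j$ is at most $2Y^j$, with no power of $Q$ left over. This gives $R(Q)\le 2\cdot 4^j j!\,(Y/P(Y))^j\le (CjV)^j$ with $V=\log Y=A\log L$, using $j!\le j^j$ and $P(Y)\gg Y/V$.

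The observation you are missing is that $jV\le 2\log(2Q)+V\ll L$ throughout the range $Q\le x$. Hence $\log(CjV)\le\log L+O(1)$, and
\[
\log R(Q)\le\Bigl(\frac{2\log(2Q)}{A\log L}+1\Bigr)\bigl(\log L+O(1)\bigr)=\Bigl(\frac2A+o(1)\Bigr)\log Q
\]
as soon as $\log Q$ is large compared with $\log L$ (the paper takes $\log(2Q)\ge L^\delta$). This is where $\rho>2/A$ enters: for large $Q$, not for small $Q$ as you suggest.

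For the remaining range $\log(2Q)<L^\delta$, use instead $\log\bigl(C(2\log(2Q)+V)\bigr)\le\delta\log L+O(1)$ together with $\log Q\ge(A-2)\log L+O(1)$. This gives $\log R(Q)/\log Q\le 2\delta/A+\delta/(A-2)+o(1)$, which is $<\rho$ once $\delta$ is small.

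The fallbacks you propose do not close the gap.
\begin{itemize}
\item Restricting to $Q\le Y^{C}$ does not prove the statement. The bound is required uniformly for $cL^{A-2}\le Q\le x$, and the proof of Theorem~\ref{thm:main} sums dyadic blocks all the way up to $x$.
\item A trivial count cannot help. The lemma counts all primitive characters with conductor in $[Q,2Q)$, of which there are $\asymp Q^2$. Moreover, any bound of size $\ge Q$ is useless, since the lemma must hold for every $\rho>2/A$, including $\rho<1$ ($\rho<3/4$ in the application).
\item Your floor choice $j=\lfloor 2\log Q/\log Y\rfloor$ leaves a factor $Q^2/Y^j$ that can be as large as $Y=L^A$. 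For $Q\asymp L^{A-2}$ with $A<4$ it even gives $j=0$, where the method says nothing.
\item The inequality $Q\ge cL^{A-2}$ yields $Y\le(Q/c)^{A/(A-2)}$, not $\ge$.
\end{itemize}
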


\begin{proof}
Write
\[
        P\coloneq P(Y),
        \qquad
        V\coloneq \log Y=A\log L,
        \qquad
        U\coloneq \log(2Q).
\]
Choose
\begin{equation}\label{eq:h-choice}
        h\coloneq \max\left\{1,
        \left\lceil\frac{2U}{V}\right\rceil\right\}.
\end{equation}
Then
\begin{equation}\label{eq:Yh-Q2}
        Y^h\geq(2Q)^2.
\end{equation}
Expand
\begin{equation}\label{eq:power-prime-sum}
 \left(\sum_{p\in\mathcal P(Y)}\psi(p)\right)^h
 =\sum_{m\leq Y^h}a_h(m)\psi(m),
\end{equation}
where $a_h(m)$ is the number of ordered $h$-tuples
$(p_1,\ldots,p_h)\in\mathcal P(Y)^h$ for which $p_1\cdots p_h=m$.
Unique factorization implies
\begin{equation}\label{eq:ah-second-moment}
        \sum_m a_h(m)^2\leq h!P^h.
\end{equation}
Indeed, once one ordered $h$-tuple is fixed, there are at most $h!$
ordered tuples with the same product.  Summing over all $P^h$ initial tuples
proves \eqref{eq:ah-second-moment}.\\
Every character counted by $R(Q)$ satisfies
\[
 \left|\sum_{m\leq Y^h}a_h(m)\psi(m)\right|^2
 \geq(P/2)^{2h}.
\]
Since $d/\varphi(d)\geq1$, Lemma~\ref{lem:large-sieve}, applied with modulus
bound $2Q$ and length $Y^h$, gives
\begin{align*}
 R(Q)(P/2)^{2h}
 &\leq
 \sum_{d<2Q}\frac{d}{\varphi(d)}
 \sum_{\psi\bmod d}^{*}
 \left|\sum_{m\leq Y^h}a_h(m)\psi(m)\right|^2\leq
 \bigl((2Q)^2+Y^h-1\bigr)
 \sum_m a_h(m)^2\leq
 2Y^h h!P^h,
\end{align*}
where the last inequality uses \eqref{eq:Yh-Q2} and
\eqref{eq:ah-second-moment}.  Hence
\begin{equation}\label{eq:RQ-preuniform}
 R(Q)
 \leq
 2\cdot4^h h!\left(\frac{Y}{P}\right)^h.
\end{equation}
By the prime number theorem, $P\gg Y/\log Y$.  Since $h!\leq h^h$, we obtain
\begin{equation}\label{eq:RQ-ChV}
        R(Q)\leq (C hV)^h
\end{equation}
for an absolute constant $C>0$, after enlarging $C$ if necessary.
It remains to verify uniformly that the right side of
\eqref{eq:RQ-ChV} is at most $Q^\rho$.  From \eqref{eq:h-choice},
\begin{equation}\label{eq:h-upper}
        h\leq\frac{2U}{V}+1,
        \qquad
        hV\leq2U+V.
\end{equation}
Also, by \eqref{eq:Q-range-LS},
\begin{equation}\label{eq:U-lower}
        U\geq(A-2)\log L+O_{A,c}(1),
\end{equation}
and $U\leq L+O(1)$ because $Q\leq x$.
Choose a fixed $\delta>0$, to be specified below.  First suppose that
$U\geq L^\delta$.  From \eqref{eq:h-upper},
\[
 \log(ChV)\leq\log\bigl(C(2U+V)\bigr)
 \leq\log L+O_{A,\delta}(1).
\]
Since $V=A\log L$, it follows that
\begin{align*}
 \frac{\log R(Q)}{\log Q}
 &\leq
 \frac{\left(2U/V+1\right)
       \left(\log L+O_{A,\delta}(1)\right)}{\log Q}\leq \frac{2}{A}+o_{A,\delta}(1),
\end{align*}
where we used $U/\log Q=1+o(1)$ and
$(\log L)/U\leq(\log L)/L^\delta=o(1)$.  Since $\rho>2/A$, this is less than
$\rho$ for sufficiently large $x$.
Now suppose that $U<L^\delta$.  From \eqref{eq:h-upper},
\[
 \log(ChV)
 \leq\log\bigl(C(2U+V)\bigr)
 \leq \delta\log L+O_{A,\delta}(1).
\]
Using \eqref{eq:U-lower}, we get
\begin{align}\label{eq:36}
 \frac{\log R(Q)}{\log Q}
 &\leq
 \frac{\left(2U/V+1\right)
       \left(\delta\log L+O_{A,\delta}(1)\right)}{\log Q}\leq
 \frac{2\delta}{A}
 +\frac{\delta}{A-2}
 +o_{A,c,\delta}(1).
\end{align}
Choose $\delta>0$ so small that
\[
        \frac{2\delta}{A}+\frac{\delta}{A-2}<\rho.
\]
Then \eqref{eq:36} is also less than $\rho$ for sufficiently large $x$.
This proves \eqref{eq:RQ-bound} uniformly throughout
\eqref{eq:Q-range-LS}.
\end{proof}

\section{Proof of Theorem~\ref{thm:main}}
It is enough to consider sufficiently large $x$.  Fix $A>8/3$ and put
\[
        L\coloneq \log x,
        \qquad
        Y\coloneq L^A.
\]
Since $2/A<3/4$, choose a real number $\rho$ such that
\begin{equation}\label{eq:rho-choice-main}
        \frac{2}{A}<\rho<\frac34.
\end{equation}
By Lemma~\ref{lem:optimization}, we may choose fixed integers $r\geq2$ and
$k\geq1$ such that, writing
\[
        \theta\coloneq \theta_{r,k},
        \qquad
        \lambda\coloneq \lambda_{r,k},
\]
one has
\begin{equation}\label{eq:lambda-rho-main}
        \rho<\lambda<\frac34.
\end{equation}
In particular,
\begin{equation}\label{eq:balance-main}
        1-\theta
        =k\left(\frac{\theta}{r}-b_r\right)
        =\lambda.
\end{equation}
Choose $\eta>0$ sufficiently small that
\begin{equation}\label{eq:eta-choice-main}
        \rho+3\eta<\lambda.
\end{equation}
Split
\begin{equation}\label{eq:main-split}
        \sum_{n\leq x}G(n)=S_{\rm small}+S_{\rm large},
\end{equation}
where $S_{\rm small}$ is the contribution of the integers with $G(n)\leq Y$
and $S_{\rm large}$ is the contribution of the integers with $G(n)>Y$.
Trivially,
\begin{equation}\label{eq:Ssmall}
        S_{\rm small}\leq xY=xL^A.
\end{equation}
For each $n$ counted by $S_{\rm large}$, choose the prime-order character
$\chi_n$ from Lemma~\ref{lem:prime-order}, and let $\psi_n\bmod d_n$ be its
primitive inducing character.  By Lemma~\ref{lem:large-G-prime-sum},
\begin{equation}\label{eq:associated-new}
        d_n\geq D_x\coloneq C_B^{-1}L^{A-2},
        \qquad
        \ord(\psi_n)\text{ is prime},
\end{equation}
and
\begin{equation}\label{eq:associated-primesum}
 \left|\sum_{p\in\mathcal P(Y)}\psi_n(p)\right|
 \geq\frac12P(Y).
\end{equation}
For each $d\geq1$, let $\mathcal U_d(x)$ be the set of primitive characters
$\psi\bmod d$ of prime order satisfying
\eqref{eq:associated-primesum}, and write
\[
        u_d(x)\coloneq \#\mathcal U_d(x).
\]
Lemma~\ref{lem:large-sieve-exceptional}, with $c=C_B^{-1}$, gives, uniformly
for every dyadic $Q$ with $D_x\leq Q\leq x$,
\begin{equation}\label{eq:ud-dyadic}
        \sum_{Q\leq d<2Q}u_d(x)\ll_A Q^\rho.
\end{equation}
Here and below the implied constants may also depend on the fixed choices of
$\rho,r,k$ and $\eta$, all of which depend only on $A$.\\
Let $C_0$ be the constant from Proposition~\ref{prop:flexible-exceptional}
and, for $D_x\leq d\leq x$, put
\begin{equation}\label{eq:Hd-def}
        H_d\coloneq \max\{C_0L,d^\theta\}.
\end{equation}
For sufficiently large $x$, one has $C_0L\leq x$ and $d^\theta\leq x$, so
$H_d$ lies in the range required by Proposition~\ref{prop:flexible-exceptional}.
For every $\psi\in\mathcal U_d(x)$, that proposition gives
\begin{equation}\label{eq:Idpsi}
 \sum_{\substack{n\leq x\\d\mid n}}
 B(\psi\chi_{0,n})
 \ll_A
 \frac{xH_d}{d}
 +xL^2(\log H_d)^kH_d^{-k/r}d^{kb_r+\eta}.
\end{equation}
Every integer counted by $S_{\rm large}$ contributes to one of the pairs
$(d,\psi)$ on the right.  Since
$G(n)=B(\psi_n\chi_{0,n})$, enlarging the sum gives
\begin{equation}\label{eq:Slarge-enlarge}
 S_{\rm large}
 \leq
 \sum_{D_x\leq d\leq x}
 \sum_{\psi\in\mathcal U_d(x)}
 \sum_{\substack{n\leq x\\d\mid n}}
 B(\psi\chi_{0,n}).
\end{equation}
We estimate \eqref{eq:Slarge-enlarge} by splitting the conductors at
\begin{equation}\label{eq:D1}
        D_1\coloneq (C_0L)^{1/\theta}.
\end{equation}
When the dyadic decomposition has one interval meeting both sides of $D_1$,
we treat that single transition interval separately.  On such an interval
$d\asymp D_1$ and $H_d\asymp L\asymp d^\theta$, so either of the estimates
below applies after changing the implied constant.

\medskip
\noindent
\emph{Small-conductor range: $D_x\leq d\leq D_1$.}
Here $H_d=C_0L$.  On a dyadic interval $Q\leq d<2Q$ contained in this
range, \eqref{eq:Idpsi} and \eqref{eq:ud-dyadic} give a contribution
\begin{equation}\label{eq:small-block}
 \ll_A
 xLQ^{\rho-1}
 +xL^{2-k/r}(\log L)^kQ^{\rho+kb_r+\eta}.
\end{equation}
Since $\rho<1$, summing the first term in \eqref{eq:small-block} over dyadic
$Q\geq D_x$ gives
\[
        \ll_A xL D_x^{\rho-1}\ll_A xL.
\]
The exponent $\rho+kb_r+\eta$ is positive, so the dyadic sum of the second
term is dominated by its last interval.  Using \eqref{eq:D1} and
\eqref{eq:balance-main}, we obtain
\begin{align*}
 &xL^{2-k/r}(\log L)^kD_1^{\rho+kb_r+\eta}\ll_A
 x(\log L)^k
 L^{2-k/r+(\rho+kb_r+\eta)/\theta}=
 x(\log L)^k
 L^{2+(\rho+\eta-\lambda)/\theta}.
\end{align*}
By \eqref{eq:eta-choice-main}, the exponent of $L$ in the last line is
strictly smaller than $2$.  Since $k$ is fixed,
$(\log L)^k=L^{o(1)}$, and hence the total contribution of the
small-conductor range is
\begin{equation}\label{eq:small-conductor-total}
        \ll_A xL^2.
\end{equation}
If $D_x>D_1$, this range is empty and there is nothing to prove.

\medskip
\noindent
\emph{Large-conductor range: $d>D_1$.}
Here $H_d=d^\theta$.  By \eqref{eq:balance-main},
\[
        \frac{xH_d}{d}=xd^{-\lambda}
\]
and
\[
 xL^2(\log H_d)^kH_d^{-k/r}d^{kb_r+\eta}
 \ll_A
 xL^2(\log d)^k d^{-\lambda+\eta}.
\]
Since $d>D_1\to\infty$, for sufficiently large $x$ we may absorb the fixed
power of $\log d$ into $d^\eta$.  Thus
\begin{equation}\label{eq:large-conductor-per-char}
 \sum_{\substack{n\leq x\\d\mid n}}
 B(\psi\chi_{0,n})
 \ll_A xL^2d^{-\lambda+2\eta}
\end{equation}
for every $\psi\in\mathcal U_d(x)$.  By \eqref{eq:ud-dyadic}, a dyadic
interval $Q\leq d<2Q$ contributes at most
\[
        \ll_A xL^2Q^{\rho-\lambda+2\eta}.
\]
The exponent is negative by \eqref{eq:eta-choice-main}.  Summing the resulting
geometric series over the dyadic intervals with $Q\gg D_1$ therefore gives
\begin{equation}\label{eq:large-conductor-total}
        \ll_A xL^2.
\end{equation}
Combining \eqref{eq:Slarge-enlarge},
\eqref{eq:small-conductor-total}, and
\eqref{eq:large-conductor-total}, we conclude that
\begin{equation}\label{eq:Slarge-final}
        S_{\rm large}\ll_A xL^2.
\end{equation}
Together with \eqref{eq:Ssmall}, and using $A>8/3>2$, this proves
\[
        \sum_{n\leq x}G(n)
        \ll_A xL^A.
\]
The finitely many smaller values of $x$ are absorbed into the implied
constant.
\begin{remark}[The barrier of the method]\label{rem:barrier}
Lemma~\ref{lem:large-sieve-exceptional} gives a dyadic exceptional-character
exponent arbitrarily close to $2/A$, while the Norton/high-moment argument in
Section~\ref{sec:highmoment} gives a conductor saving arbitrarily close to
$3/4$.  The proof requires a strict gap between these exponents.  Thus the
present argument works precisely in the range
\[
        \frac{2}{A}<\frac34,
\]
that is, $A>8/3$.  At $A=8/3$ both limiting exponents are $3/4$, so the
argument does not reach the endpoint.
\end{remark}

\section{Proof of Theorem \ref{thm:almost-all-new}}
The proof is divided into two preliminary steps.  We first use the
Siegel--Walfisz theorem to show that characters of small conductor
cannot satisfy the large prime-sum condition which arises when
$G(n)$ is large.  We then use the multiplicative large sieve, with a
more precise choice of the high moment, to obtain a weighted estimate
for the remaining exceptional primitive characters.  Finally, we
associate such a primitive character to every integer $n$ for which
$G(n)$ is large and use the fact that its conductor divides $n$ to
deduce the theorem.
\begin{lemma}\label{lem:endpoint-small-conductors}
For every fixed $B>0$, uniformly over nonprincipal Dirichlet characters $\psi\bmod d$ with $d\le (\log z)^B$, one has
\begin{equation}\label{eq:endpoint-SW}
    \left|\sum_{p\leq z}\psi(p)\right|
    \ll_B
    \frac{z}{(\log z)^3}
    \qquad (z\to\infty).
\end{equation}
\end{lemma}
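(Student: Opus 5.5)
The plan is to reduce \eqref{eq:endpoint-SW} to the Siegel--Walfisz theorem for primes in arithmetic progressions, via orthogonality of characters. Fix $B>0$ and let $\psi\bmod d$ be nonprincipal with $d\le(\log z)^B$. Decompose the sum according to residue classes modulo $d$:
\[
    \sum_{p\le z}\psi(p)
    =\sum_{\substack{a\bmod d\\(a,d)=1}}\psi(a)\,\pi(z;d,a).
\]
Primes dividing $d$ contribute $0$, since $\psi(p)=0$ for them.

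Next, apply Siegel--Walfisz with exponent $N=3$ (any fixed $N\ge 3$ works). It gives, uniformly for $d\le(\log z)^B$ and $(a,d)=1$,
\[
    \pi(z;d,a)=\frac{\operatorname{li}(z)}{\varphi(d)}+O_{B}\!\left(\frac{z}{(\log z)^{N+B}}\right).
\]
We use the version with error $z\exp(-c_B\sqrt{\log z})$, which is stronger than any power of $\log z$. Substituting, the main terms contribute
\[
    \frac{\operatorname{li}(z)}{\varphi(d)}\sum_{(a,d)=1}\psi(a)=0,
\]
because $\psi$ is nonprincipal. The error terms are summed over at most $\varphi(d)\le d\le(\log z)^B$ classes, giving a total error of
\[
    \ll_B (\log z)^B\cdot\frac{z}{(\log z)^{3+B}}=\frac{z}{(\log z)^3}.
\]
This is exactly \eqref{eq:endpoint-SW}. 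Taking the Siegel--Walfisz error in the form $z(\log z)^{-(3+B)}$, which is legitimate since $3+B$ is a fixed exponent depending only on $B$, makes the bookkeeping immediate.

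An alternative route is to invoke Siegel--Walfisz directly in its character form, $\sum_{p\le z}\psi(p)\ll_B z\exp(-c\sqrt{\log z})$ for nonprincipal $\psi$ of modulus $d\le(\log z)^B$. This follows from the zero-free region together with Siegel's bound on exceptional zeros, and it gives the claim at once. Either way the argument is short.

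The only point requiring care, and the main obstacle, is uniformity. Siegel's theorem is ineffective, but the implied constant depends only on $B$, not on $\psi$ or $d$. Uniformity over all nonprincipal (not necessarily primitive) $\psi$ holds because the error term in the progression form is uniform in $a$ and $d$. Non-primitivity is harmless in the character form as well: passing from $\psi$ to its primitive inducer changes the sum by at most $\omega(d)\ll\log d$, which is negligible.
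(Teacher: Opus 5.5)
Your proposal is correct and follows the paper's proof essentially verbatim. You group the primes by reduced residue class modulo $d$, apply Siegel--Walfisz with error $z(\log z)^{-(B+3)}$ uniformly for $d\le(\log z)^B$, and kill the main term by orthogonality since $\psi$ is nonprincipal. You then sum the error over at most $\varphi(d)\le(\log z)^B$ classes, and your comment on ineffectivity matches the paper's subsequent remark.
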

\begin{proof}
By the Siegel--Walfisz theorem \cite[Cor.\,5.29]{IwaniecKowalski}, applied with logarithmic exponent $B+3$, uniformly for $d\le (\log z)^B$ and $(a,d)=1$, we have
\[
    \pi(z;d,a)
    =
    \frac{\operatorname{Li}(z)}{\varphi(d)}
    +
    O_B\left(
        \frac{z}{(\log z)^{B+3}}
    \right),
\]
where $\pi(z;d,a)$ denotes the number of primes $p\leq z$ satisfying
$p\equiv a\pmod d$.\\
Since $\psi(p)=0$ when $p|d$, we can group the remaining primes according to their reduce residue class modulo $d$:
\[
    \sum_{p\leq z}\psi(p)
    =
    \sum_{\substack{a\bmod d\\(a,d)=1}}
    \psi(a)\pi(z;d,a).
\]
Substituting the Siegel--Walfisz estimate gives
\begin{align*}
    \sum_{p\leq z}\psi(p)
    &=
    \frac{\operatorname{Li}(z)}{\varphi(d)}\sum_{\substack{a\bmod d\\(a,d)=1}}\psi(a)
    +
    O_B\left(\frac{z}{(\log z)^{B+3}} \sum_{\substack{a\bmod d\\(a,d)=1}} |\psi(a)|\right).
\end{align*}
Since $\psi$ is nonprincipal, 
\[
    \sum_{\substack{a\bmod d\\(a,d)=1}}\psi(a)=0,
\]
so the main term vanishes.  Moreover $|\psi(a)|=1$ on the residue classes.  Hence
\[
    \left|\sum_{p\leq z}\psi(p)\right|
    \ll_B
    \varphi(d)\frac{z}{(\log z)^{B+3}}.
\]
Finally,
\[
    \varphi(d)\leq d\leq(\log z)^B,
\]
and therefore
\[
    \left|\sum_{p\leq z}\psi(p)\right|
    \ll_B
    \frac{z}{(\log z)^3}.
\]
\end{proof}
\begin{remark}
Note that the implied constant in Lemma \ref{lem:endpoint-small-conductors} is not claimed to be effective.
\end{remark}
\begin{lemma}\label{lem:endpoint-weighted}
Fix $c>4/e^2$ and $M>0$. For sufficiently large $x$, put
\[
L\coloneq\log x,\qquad t\coloneq\log L,\qquad Y\coloneq cL^2,\qquad P\coloneq\pi(Y),
\]
and define 
\begin{equation}\label{eq:endpoint-delta}
    \delta_x
    \coloneq
    \frac{2L}{P\log2}.
\end{equation}
For each positive integer $d$, let $\mathcal V_d(x)$ denote the set of primitive
nonprincipal characters $\psi\bmod d$ satisfying
\begin{equation}\label{eq:endpoint-character-condition}
    \left|\sum_{p\le Y}\psi(p)\right|
    \ge
    (1-\delta_x)P,
\end{equation}
and denote $v_d(x)\coloneq\#\mathcal V_d(x)$. Then,
\begin{equation}\label{eq:endpoint-weighted-bound}
    \sum_{d\le x}\frac{v_d(x)}{d}
    \ll_{c,M}
    \frac{1}{(\log\log x)^M}.
\end{equation}
\end{lemma}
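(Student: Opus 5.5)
The plan is to split the conductors into a small range, handled by Lemma~\ref{lem:endpoint-small-conductors}, and a large range, handled by a high-moment form of the multiplicative large sieve (Lemma~\ref{lem:large-sieve}). In the large range the moment $h$ will be tied to the dyadic size of $d$, and the Stirling constant $e$ is what produces the threshold $c>4/e^2$.

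\emph{Small conductors.} Fix $B>M+1$. If $d\le(\log Y)^B$ and $\psi\bmod d$ is nonprincipal, Lemma~\ref{lem:endpoint-small-conductors} with $z=Y$ gives $|\sum_{p\le Y}\psi(p)|\ll_B Y/(\log Y)^3=o(P)$. Since $\delta_x\to0$, condition \eqref{eq:endpoint-character-condition} then fails for large $x$, so $v_d(x)=0$ for all such $d$. Note that $\log Y\asymp t$.

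\emph{Large conductors.} Split $(\log Y)^B<d\le x$ into dyadic blocks $Q\le d<2Q$, and let $R(Q)$ be the number of characters in $\bigcup_{Q\le d<2Q}\mathcal V_d(x)$. As in the proof of Lemma~\ref{lem:large-sieve-exceptional}, raise the prime sum to the power $h$. Use $\sum_m a_h(m)^2\le h!P^h$ and apply Lemma~\ref{lem:large-sieve} with modulus bound $2Q$ and length $Y^h$. This gives
\[
R(Q)\,(1-\delta_x)^{2h}P^{2h}\le\bigl((2Q)^2+Y^h\bigr)\,h!\,P^h .
\]
Choose $h$ minimal with $Y^h\ge(2Q)^2$, so that $Y^{h-1}<(2Q)^2$. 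Then $(2Q)^2+Y^h\le 2Y^h$ and $Q\gg Y^{(h-1)/2}$.

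The factor $(1-\delta_x)^{-2h}$ is harmless. Indeed $h\le 2\log(2Q)/\log Y+1\ll L/t$, while $\delta_x\asymp t/(cL)$ because $P\sim cL^2/(2t)$. Hence $\delta_x h=O_c(1)$, and the factor is bounded by a constant, not by a constant to the power $h$.

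By the prime number theorem and Stirling, $h!\ll\sqrt h\,(h/e)^h$ and $Y/P\sim\log Y$. Therefore
\[
\frac{R(Q)}{Q}\ll_c \sqrt h\,Y^{1/2}\left(\frac{(1+o(1))\,h\log Y}{e\,Y^{1/2}}\right)^{h}.
\]
The key inequality is $h\log Y\le 2\log(2Q)+\log Y\le(2+o(1))L$. Since $Y^{1/2}=\sqrt c\,L$, the base is at most $\beta:=(2+o(1))/(e\sqrt c)$, and $\beta<1$ exactly because $c>4/e^2$.

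\emph{Summation.} For $h=1$ (that is, $Q\le\sqrt Y$), bound directly: $R(Q)\ll(Q^2+Y)/P\ll\log Y$. Summing $R(Q)/Q$ over dyadic $Q\ge(\log Y)^B$ then gives $\ll(\log Y)^{1-B}\ll t^{1-B}\le t^{-M}$.

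For $h\ge2$, each value of $h$ corresponds to $O(\log Y)$ dyadic blocks. The per-block bound $\sqrt h\,Y^{1/2}\bigl(h\log Y/(eY^{1/2})\bigr)^h$ behaves as follows:
\begin{itemize}
\item for bounded $h$ it is $\ll t^{O(1)}L^{1-h}\le t^{O(1)}/L$;
\item for $h$ up to a small multiple of $L/t$ the bracket is $\le\beta<1$, and the factor $(h t/L)^{h}$ decays faster than $L^{-1/2}$ once $h\ge2$;
\item for the largest $h\asymp L/t$ we get $\beta^h=\exp(-\Omega(L/t))$, which kills the factor $\sqrt Y$.
\end{itemize}
Summing over $h\le L/t$ therefore gives $\ll L^{-1/2+o(1)}$. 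Together with the small-conductor and $h=1$ contributions, this yields $\sum_{d\le x}v_d(x)/d\ll_{c,M}t^{-M}=(\log\log x)^{-M}$.

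The main obstacle is this last bookkeeping. One must check carefully that the prefactor $Y^{1/2}$ created by rounding $h$ up, together with the $\sqrt h$ and the $(1+o(1))^h$ losses, is absorbed uniformly in $h$. This is delicate in the transition regime where $h\log Y$ is comparable to $L$, where the margin comes only from $\beta<1$. If that margin proves too thin, I would fall back on choosing $h$ slightly larger, for example $Y^h\ge Q^{2}L^{\varepsilon}$, to trade a tiny loss in $\beta$ for removing the rounding factor.
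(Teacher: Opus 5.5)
Your proposal is correct and is essentially the paper's argument: the Siegel--Walfisz cutoff at $d\le(\log Y)^B$, the high-moment large sieve with $h=\lceil 2\log(2Q)/\log Y\rceil$, and Stirling producing the base $2/(e\sqrt c)$. This base is exactly $e^{-a_c}$ with the paper's $a_c=1+\tfrac12\log c-\log 2$. The only difference is bookkeeping. The paper splits into $Q\le L^4$, $L^4<Q\le e^{\sqrt L}$ and $Q>e^{\sqrt L}$, and uses the precise Stirling bound only in the last range. You instead index by $h$ and use $Q\gg Y^{(h-1)/2}$ throughout. The margin you worried about is ample: the $h$-terms decay geometrically from $h=2$ (total $\ll t^{O(1)}/L$), and they are $\exp(-\Omega_c(L/t))$ once $h\gg L/t$.
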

\begin{proof}
Put
\[
    V\coloneq\log Y
      =2t+\log c.
\]
By the prime number theorem,
\begin{equation}\label{eq:endpoint-PNT}
    P
    =
    \frac{Y}{V}(1+o(1)),
    \qquad
    \delta_x
    =
    O_c\left(\frac{t}{L}\right)
    =
    o(1).
\end{equation}
In particular, for sufficiently large $x$,
\[
    0<\delta_x<\frac14.
\]

\medskip
\noindent
Choose
\[
    B\coloneq M+7,
    \qquad
    D_0\coloneq V^B.
\]
Applying Lemma~\ref{lem:endpoint-small-conductors} with $z=Y$, we
obtain, uniformly for every nonprincipal character $\psi\bmod d$ with
$d\leq D_0$,
\[
    \left|\sum_{p\leq Y}\psi(p)\right|
    \ll_B
    \frac{Y}{V^3}.
\]
On the other hand, by \eqref{eq:endpoint-PNT},
\[
    (1-\delta_x)P
    \asymp_c
    \frac{Y}{V}.
\]
Hence, for sufficiently large $x$,
\[
    \frac{Y}{V^3}
    =
    o\left((1-\delta_x)P\right).
\]
It follows that no such character satisfies
\eqref{eq:endpoint-character-condition}.  Thus
\begin{equation}\label{eq:endpoint-small-vanish}
    v_d(x)=0
    \qquad(d\leq D_0).
\end{equation}
We now consider the case $d\ge D_0$. Let $  D_0\leq Q\leq x,$ and put
\[
    R(Q)
    \coloneq
    \sum_{Q\leq d<2Q}v_d(x),
    \qquad
    U\coloneq\log(2Q).
\]
Choose
\begin{equation}\label{eq:endpoint-h}
    h
    \coloneq
    \left\lceil\frac{2U}{V}\right\rceil.
\end{equation}
For sufficiently large $x$ we have $h\geq1$, and
\begin{equation}\label{eq:endpoint-h-bounds}
    Y^h\geq(2Q)^2,\qquad \frac{2U}{V}\leq h <\frac{2U}{V}+1,\qquad hV<2U+V.
\end{equation}
Expanding a power, we write
\begin{equation}\label{eq:endpoint-power}
    \left(\sum_{p\leq Y}\psi(p)\right)^h
    =
    \sum_{m\leq Y^h}a_h(m)\psi(m),
\end{equation}
where $a_h(m)$ denotes the number of ordered $h$-tuples of primes $p_1,\ldots,p_h\leq Y$ such that $   p_1\cdots p_h=m.$
Unique factorization gives
\begin{equation}\label{eq:endpoint-ah}
    \sum_m a_h(m)^2
    \leq
    h!P^h.
\end{equation}
Indeed, after fixing one ordered $h$-tuple, there are at most $h!$
ordered tuples having the same product.
Every character counted by $R(Q)$ satisfies
\[
    \left|
        \sum_{m\leq Y^h}a_h(m)\psi(m)
    \right|^2
    \geq
    \bigl((1-\delta_x)P\bigr)^{2h}.
\]
Since $d/\varphi(d)\geq1$, Lemma~\ref{lem:large-sieve}, applied with
modulus bound $2Q$ and length $\lfloor Y^h\rfloor$, gives
\begin{align*}
    R(Q)\bigl((1-\delta_x)P\bigr)^{2h}
    &\leq
    \sum_{d\leq2Q}\frac{d}{\varphi(d)}
    \sum_{\psi\bmod d}^{*}
    \left|
        \sum_{m\leq Y^h}a_h(m)\psi(m)
    \right|^2
    \leq
    \bigl((2Q)^2+Y^h\bigr)
    \sum_m a_h(m)^2
    \leq
    2Y^h h!P^h,
\end{align*}
where we used \eqref{eq:endpoint-h-bounds} and
\eqref{eq:endpoint-ah}.  Consequently,
\begin{equation}\label{eq:endpoint-moment-bound}
    R(Q)
    \leq
    2h!
    \left(
        \frac{Y}{(1-\delta_x)^2P}
    \right)^h.
\end{equation}
By the prime number theorem and $\delta_x=o(1)$,
\[
    \frac{Y}{(1-\delta_x)^2P}
    \ll_c V.
\]
Since $h!\leq h^h$, \eqref{eq:endpoint-moment-bound} therefore also
gives the estimate
\begin{equation}\label{eq:endpoint-rough}
    R(Q)
    \leq
    (ChV)^h
\end{equation}
for some absolute constant $C>0$ and all sufficiently large $x$.\\
We now estimate $R(Q)$ in three conductor ranges.

\medskip
\noindent
\emph{First range: $D_0\leq Q\leq L^4$.}
Here
\[
    U\leq4t+\log2,
    \qquad
    V=2t+\log c.
\]
Thus, for sufficiently large $x$, $  h\leq5.$
By \eqref{eq:endpoint-rough}, we have
\begin{equation}\label{eq:endpoint-range1}
    R(Q)\ll_c t^5.
\end{equation}

\medskip
\noindent
\emph{Second range: $L^4<Q\leq\exp(\sqrt L)$.}
In this range
\[
    U\geq4t,
    \qquad
    U\leq\sqrt L+\log2.
\]
By \eqref{eq:endpoint-h-bounds}, one has
\[
    \log(ChV)
    \leq
    \log\bigl(C(2U+V)\bigr)
    \leq
    \frac12t+O_c(1).
\]
Using \eqref{eq:endpoint-rough},
\begin{align*}
    \frac{\log R(Q)}{U}
    &\leq
    \left(\frac2V+\frac1U\right)\left(\frac12t+O_c(1)\right)
    \leq
    \frac58+o_c(1).
\end{align*}
Hence, for sufficiently large $x$,
\begin{equation}\label{eq:endpoint-range2}
    R(Q)\ll_c Q^{3/4},
    \qquad
    \frac{R(Q)}Q\ll_c Q^{-1/4}.
\end{equation}

\medskip
\noindent
\emph{Third range: $\exp(\sqrt L)<Q\leq x$.}
Here the leading constants in \eqref{eq:endpoint-moment-bound} are
important.  Put
\begin{equation}\label{eq:endpoint-epsilon}
    \varepsilon_x
    \coloneq
    \log\left(
        \frac{Y}
        {(1-\delta_x)^2PV}
    \right).
\end{equation}
By \eqref{eq:endpoint-PNT}, we have $    \varepsilon_x=o_c(1).$
Using the logarithmic form of Stirling's formula,
\[
    \log(h!)
    =
    h\log h-h+O(\log(h+1)),
\]
and recalling the definition of $\varepsilon_x$, we obtain from
\eqref{eq:endpoint-moment-bound}
\begin{equation}\label{eq:endpoint-stirling}
    \log R(Q)
    \leq
    h\bigl(
        \log(hV)-1+\varepsilon_x
    \bigr)
    +
    O(\log(h+1)).
\end{equation}
Put
\[
    h_0\coloneq\frac{2U}{V}.
\]
Since $h_0\leq h<h_0+1$, consider the function
\[
    f(s)\coloneq
    s\bigl(\log(sV)-1+\varepsilon_x\bigr).
\]
By the ordinary mean value theorem,
\[
    f(h)-f(h_0)
    =
    f'(\xi)(h-h_0)
\]
for some $\xi\in[h_0,h]$.  Since
\[
    f'(s)=\log(sV)+\varepsilon_x
\]
and, throughout this interval,
\[
    sV\leq hV<2U+V=O_c(L),
\]
we have $f'(s)=O_c(t)$.  As $h-h_0<1$, it follows that
\[
    f(h)=f(h_0)+O_c(t).
\]
Consequently,
\begin{equation}\label{eq:endpoint-stirling2}
    \log R(Q)
    \leq \frac{2U}{V}
    \bigl(\log(2U)-1+\varepsilon_x\bigr)
    +
    O_c(t).
\end{equation}
Since $Q\leq x$, we have $    U\leq L+\log2,$ and hence
\[
    \log(2U)
    \leq
    t+\log2+O(1/L).
\]
Define
\begin{equation}\label{eq:endpoint-ac}
    a_c
    \coloneq
    1+\frac12\log c-\log2.
\end{equation}
The condition $ c>\frac4{e^2}$ implies that $a_c>0$. Since $    V=2t+\log c,$ we obtain
\begin{align*}
    \frac2V
    \bigl(\log(2U)-1+\varepsilon_x\bigr)
    &\leq
    \frac{t+\log2-1+\varepsilon_x+O(1/L)}{t+\frac12\log c}
    =
    1-
    \frac{a_c-\varepsilon_x+O(1/L)}{t+\frac12\log c}
    \leq
    1-\frac{a_c}{4t}
\end{align*}
for sufficiently large $x$.
It follows from \eqref{eq:endpoint-stirling2} that
\[
    \log R(Q)
    \leq
    U-\frac{a_cU}{4t}+O_c(t).
\]
Since $ U\geq\sqrt L$ in the present range, we have $  t=o(U/t).$ Hence there exists $x_0(c)$ such that, for $x\geq x_0(c)$,
\[
    |O_c(t)|
    \leq
    \frac{a_cU}{8t},
\]
from which the following estimate follows:
\[
    \log R(Q)
    \leq
    U-\frac{a_cU}{8t}.
\]
Writing
\[
    \kappa_c\coloneq\frac{a_c}{8}>0,
\]
we obtain
\begin{equation}\label{eq:endpoint-range3}
    \frac{R(Q)}Q
    \leq
    2
    \exp\left(
        -\kappa_c\frac{U}{t}
    \right)
    \leq
    2
    \exp\left(
        -\kappa_c\frac{\sqrt L}{t}
    \right).
\end{equation}
It remains to sum the preceding estimates over the dyadic conductor intervals.
Put
\[
    Q_j\coloneq2^jD_0\qquad(j\geq0),
\]
and keep only those $j$ for which $Q_j\leq x$.
By \eqref{eq:endpoint-small-vanish},
\[
    \sum_{d\leq x}\frac{v_d(x)}d
    \leq
    \sum_{\substack{j\geq0\\Q_j\leq x}}\frac{R(Q_j)}{Q_j}.
\]
For the first range, \eqref{eq:endpoint-range1} gives
\[
    \sum_{Q_j\leq L^4}
    \frac{R(Q_j)}{Q_j}
    \ll_c
    t^5
    \sum_{j\geq0}\frac1{2^jD_0}
    \ll_c
    \frac{t^5}{D_0} 
    \ll_{c,M}t^{-M-2},
\]
where the last inequality follows from the fact that
\[
    D_0=V^{M+7}\asymp_c t^{M+7},
\]
In the second range, \eqref{eq:endpoint-range2} gives
\[
    \frac{R(Q_j)}{Q_j}\ll_c Q_j^{-1/4}.
\]
If $j_0$ is the first index for which $Q_{j_0}>L^4$, then
$Q_{j_0}\asymp L^4$, and hence
\[
\begin{aligned}
    \sum_{L^4<Q_j\leq\exp(\sqrt L)}
    \frac{R(Q_j)}{Q_j}
    &\ll_c
    \sum_{j\geq j_0}Q_j^{-1/4} 
    \ll_c
    L^{-1}\sum_{m\geq0}2^{-m/4}
    \ll_c
    L^{-1}.
\end{aligned}
\]
Finally, there are $O(L)$ dyadic intervals in total, and therefore
\eqref{eq:endpoint-range3} gives for the third range
\[
    \ll_c
    L
    \exp\left(
        -\kappa_c\frac{\sqrt L}{t}
    \right).
\]
For every fixed $M>0$,
\[
    L^{-1}
    +
    L
    \exp\left(
        -\kappa_c\frac{\sqrt L}{t}
    \right)
    =
    o_{c,M}(t^{-M}).
\]
Combining the three ranges gives
\[
    \sum_{d\leq x}\frac{v_d(x)}d
    \ll_{c,M}
    t^{-M},
\]
which is \eqref{eq:endpoint-weighted-bound}.
\end{proof}
\begin{remark}
Lemma~\ref{lem:endpoint-weighted} is closely related to
Lemma~\ref{lem:large-sieve-exceptional}, but the two estimates are
used in different ways.

Lemma~\ref{lem:large-sieve-exceptional} is used in the proof of
Theorem~\ref{thm:main}.  On a dyadic interval $Q\leq d<2Q$, it gives
an estimate of the form
\[
    R(Q)\ll Q^\rho,
\]
where $\rho$ can be chosen arbitrarily close to $2/A$.  This is
combined with Proposition~\ref{prop:flexible-exceptional}, which gives
a saving of approximately $Q^{-\lambda}$ for a primitive character
of conductor $d\asymp Q$, with $\lambda<3/4$.  Thus the proof of
Theorem~\ref{thm:main} requires a comparison of these two powers on
each dyadic conductor interval.

This estimate is not sufficient for the endpoint almost-all problem.
Indeed, if one follows the large-sieve calculation underlying
Lemma~\ref{lem:large-sieve-exceptional} with $A=2$, the resulting
rough exponent approaches $1$.  Thus one would obtain, at best, a
bound of the shape
\[
    R(Q)\ll Q^{1+\varepsilon}.
\]
Since a primitive character of conductor $d\asymp Q$ can arise only
from integers divisible by $d$, its contribution to the exceptional
set is bounded by
\[
    \frac{x}{Q}R(Q),
\]
which in this form gives no decay in $Q$.  To reach the endpoint, we
therefore keep the more precise information in the high-moment
large-sieve estimate instead of replacing it by a bound of the form
$Q^\rho$.

For the almost-all problem, the natural quantity is
\[
    \sum_{d\leq x}\frac{v_d(x)}{d},
\]
because a primitive character of conductor $d$ can correspond to at
most $x/d$ integers $n\leq x$.  Lemma~\ref{lem:endpoint-weighted} is
designed to estimate precisely this weighted sum.  Its proof also
uses all primes $p\leq Y$, rather than only the primes in $(Y/2,Y]$,
and keep the precise dependence on the high moment.

Conversely, Lemma~\ref{lem:endpoint-weighted} does not replace
Lemma~\ref{lem:large-sieve-exceptional} in the proof of the mean-value
theorem.  There Proposition~\ref{prop:flexible-exceptional} produces
the weaker conductor weight $d^{-\lambda}$, with $\lambda<3/4$.
Since $ d^{-\lambda}=d^{1-\lambda}d^{-1},$
control of
\[
    \sum_{d\leq x}\frac{v_d(x)}{d}
\]
does not by itself give comparable control of
\[
    \sum_{d\leq x}v_d(x)d^{-\lambda}.
\]
Thus the dyadic estimate of
Lemma~\ref{lem:large-sieve-exceptional} is suited to the mean-value
argument, whereas the more precise weighted estimate of
Lemma~\ref{lem:endpoint-weighted} is needed for the endpoint
almost-all result.
\end{remark}
We now combine the preceding weighted estimate with the character
associated to an integer $n$ for which $G(n)$ is large. We first prove the corresponding estimate with $\log x$ in place of
$\log n$.
Fix $c>4/e^2$ and $M>0$, and put
\[
    L\coloneq\log x,\qquad Y\coloneq cL^2,\qquad P\coloneq\pi(Y).
\]
Suppose that
\[
    n\leq x,\qquad G(n)>Y.
\]
For sufficiently large $x$, such an $n$ has nontrivial
$\Gamma_n$.  By Lemma~\ref{lem:prime-order}, there is a
nonprincipal character $\chi_n\bmod n$ of prime order such that $  B(\chi_n)=G(n).$ Let $\psi_n\bmod d_n$ be the primitive character inducing $\chi_n$.
Then $d_n\mid n$, and, by Lemma~\ref{lem:order-inducer}, $\psi_n$ is also nonprincipal.\\ Let $p\leq Y$ be prime and suppose that $p\nmid n$.  Since $p\leq Y<G(n)=B(\chi_n),$ the definition of $B(\chi_n)$ gives $\chi_n(p)=1$. Since $\chi_n=\psi_n\chi_{0,n}$ and $\chi_{0,n}(p)=1$, we obtain $\psi_n(p)=1$. Hence the primes $p\leq Y$ for which $\psi_n(p)$ can differ from $1$
must divide $n$.  If $\omega(n)$ denotes the number of distinct prime
divisors of $n$, then
\[
    2^{\omega(n)}
    \leq
    \prod_{p\mid n}p
    \leq n,
\]
and therefore
\[
    \omega(n)
    \leq
    \frac{\log n}{\log2}
    \leq
    \frac{L}{\log2}.
\]
Since $|\psi_n(p)-1|\leq2$ for every prime $p$, we get
\begin{align*}
    \left|
        \sum_{p\leq Y}\psi_n(p)-P
    \right|
    &\leq
    2\omega(n)\leq
    \frac{2L}{\log2}.
\end{align*}
Thus, with $\delta_x$ as in \eqref{eq:endpoint-delta},
\[
    \left|\sum_{p\leq Y}\psi_n(p)\right|
    \geq
    (1-\delta_x)P.
\]
Consequently, $    \psi_n\in\mathcal V_{d_n}(x).$\\
For every exceptional $n$, choose one such primitive inducing
character.  Since $d_n\mid n$, we obtain
\begin{align}
    \#\{n\leq x:G(n)>c(\log x)^2\}
    &\leq
    \sum_{d\leq x}
    \sum_{\psi\in\mathcal V_d(x)}
    \left\lfloor\frac{x}{d}\right\rfloor 
    \leq
    x\sum_{d\leq x}\frac{v_d(x)}d.
    \label{eq:endpoint-global-count}
\end{align}
Lemma~\ref{lem:endpoint-weighted} therefore gives
\begin{equation}\label{eq:endpoint-global-tail}
    \#\{n\leq x:G(n)>c(\log x)^2\}
    \ll_{c,M}
    \frac{x}{(\log\log x)^M}.
\end{equation}
It remains to replace $\log x$ by $\log n$.  Choose fixed constants
$c_0$ and $\alpha$ such that
\[
    \frac4{e^2}<c_0<c,
    \qquad
    \sqrt{\frac{c_0}{c}}<\alpha<1.
\]
If
\[
    x^\alpha<n\leq x,
\]
then
\[
    c(\log n)^2
    \geq
    c\alpha^2(\log x)^2
    >
    c_0(\log x)^2.
\]
Therefore, after splitting the exceptional integers according as
$n\leq x^\alpha$ or $x^\alpha<n\leq x$, we have
\begin{align*}
    &\#\{3\leq n\leq x:G(n)>c(\log n)^2\}
    \leq
    x^\alpha
    +
    \#\{n\leq x:G(n)>c_0(\log x)^2\}.
\end{align*}
Applying \eqref{eq:endpoint-global-tail} with $c_0$ in place of $c$,
we obtain
\[
    \#\{3\leq n\leq x:G(n)>c(\log n)^2\}
    \ll_{c,M}
    x^\alpha
    +
    \frac{x}{(\log\log x)^M}.
\]
Since $\alpha<1$,
\[
    x^\alpha
    =
    o\left(
        \frac{x}{(\log\log x)^M}
    \right).
\]
Hence
\[
    \#\{3\leq n\leq x:G(n)>c(\log n)^2\}
    \ll_{c,M}
    \frac{x}{(\log\log x)^M},
\]
as claimed.

\section{Proof of Corollary \ref{cor:quantitative-almost-all}}

It is enough to consider $x$ sufficiently large.  Put
\[
    L\coloneq\log x,
    \qquad
    Y\coloneq L^A,
    \qquad
    D_x\coloneq C_B^{-1}L^{A-2}.
\]
Let $n\leq x$ satisfy $G(n)>Y$.  By
Lemma~\ref{lem:large-G-prime-sum}, there is a primitive character
$\psi_n\bmod d_n$, with $d_n\mid n$, such that $  d_n\geq D_x$
and
\[
    \left|
        \sum_{p\in\mathcal P(Y)}\psi_n(p)
    \right|
    \geq
    \frac12P(Y).
\]
Since
\[
    \eta<\frac{(A-2)^2}{A},
\]
we can choose $\rho$ such that
\[
    \frac2A
    <
    \rho
    <
    1-\frac{\eta}{A-2}.
\]
In particular, $\rho<1$.
For each $d\geq1$, let $v_d(x)$ denote the number of primitive
characters $\psi\bmod d$ satisfying
\[
    \left|
        \sum_{p\in\mathcal P(Y)}\psi(p)
    \right|
    \geq
    \frac12P(Y).
\]
By Lemma~\ref{lem:large-sieve-exceptional}, uniformly for
$D_x\leq Q\leq x$,
\[
    \sum_{Q\leq d<2Q}v_d(x)
    \ll_{A,\rho}Q^\rho.
\]
Every integer $n$ with $G(n)>Y$ therefore contributes to a primitive
character of conductor $d\geq D_x$ with $d\mid n$.  Hence
\[
    \#\{n\leq x:G(n)>Y\}
    \leq
    x\sum_{D_x\leq d\leq x}\frac{v_d(x)}d.
\]
Decomposing the conductor range dyadically, with
$Q_j=2^jD_x$, gives
\[
\begin{aligned}
    \sum_{D_x\leq d\leq x}\frac{v_d(x)}d
    &\ll_{A,\rho}
    \sum_{\substack{j\geq0\\Q_j\leq x}}
    Q_j^{\rho-1} \ll_\rho
    D_x^{\rho-1},
\end{aligned}
\]
since $\rho<1$.  Consequently,
\[
    \#\{n\leq x:G(n)>Y\}
    \ll_{A,\rho}
    xD_x^{\rho-1}.
\]
Since
\[
    D_x=C_B^{-1}L^{A-2},
\]
we have
\[
    D_x^{\rho-1}
    \ll_{A,\rho}
    L^{-(A-2)(1-\rho)}.
\]
By the choice of $\rho$,
\[
    (A-2)(1-\rho)>\eta,
\]
and therefore
\[
    \#\{n\leq x:G(n)>(\log x)^A\}
    \ll_{A,\eta}
    \frac{x}{(\log x)^\eta}.
\]
This proves the corollary.

\section*{Acknowledgments}
The author would like to thank A. Languasco and P. Moree for their
insightful comments and advice throughout the preparation of this article.

\end{document}